\font\bbbld=msbm10 scaled\magstephalf
\newcommand{\ve}{{\bf e}}
\newcommand{\bM}{\bar{M}}
\newcommand{\bfR}{\hbox{\bbbld R}}
\newcommand{\bfS}{\hbox{\bbbld S}}
\newcommand{\cC}{\mathcal{C}}
\newcommand{\tr}{\mbox{tr}}
\newcommand{\tF}{\tilde{F}}
\newcommand{\tc}{\tilde{c}}
\newcommand{\tm}{\tilde{m}}
\newcommand{\tu}{\tilde{u}}
\newcommand{\ol}{\overline}
\newcommand{\ul}{\underline}
\newtheorem{theorem}{Theorem}[section]
\newtheorem{lemma}[theorem]{Lemma}
 \theoremstyle{definition}
\theoremstyle{remark}
\newtheorem{remark}[theorem]{Remark}
\numberwithin{equation}{section}
\begin{document}

\title[fully nonlinear elliptic equations]
{The Dirichlet Problem for Fully Nonlinear Elliptic Equations
on Riemannian Manifolds
%in Bounded Domains
}
\author{Bo Guan}
\address{Department of Mathematics, Ohio State University,
               Columbus, OH 43210, USA} 
\email{guan@math.osu.edu}
%\address{School of Mathematical Sciences, Xiamen University, %Xiemen, China}

\thanks{Research of the author was supported in part by NSF grants.}
%\date{}

\begin{abstract}
We solve the Dirichlet problem for fully nonlinear elliptic equations on Riemannian manifolds under essentially optimal structure conditions, especially with no restrictions to the curvature of the underlying manifold
and the second fundamental form of its boundary. 
The main result (Theorem~\ref{3I-thm1})
includes a new (and optimal) result in the Euclidean case.
We introduce some new ideas and methods in deriving {\em a priori} 
estimates, which can be used to treat other types of
fully nonlinear elliptic and parabolic equations on real or complex manifolds.

{ Mathematical Subject Classification (2010):}
  35J15, 58J05, 35B45.
{ Keywords:} Fully nonlinear elliptic equations on Riemannian manifolds, Dirichlet problem,  
{\em a priori} estimates, concavity, subsolutions.
\end{abstract}

\maketitle

\bigskip

\section{Introduction}
\label{3I-I}

\medskip

Let $(\bM^n, g)$ be a compact Riemannian manifold of dimension $n \geq 2$
with smooth boundary $\partial M$, and $M$ denote the interior of $\bM$ so $\bM = M \cup \partial M$.
In this paper we are concerned with 
fully nonlinear elliptic equations of the form
\begin{equation}
\label{3I-10}
%\left\{ \begin{aligned}
f (\lambda (\nabla^2 u + \chi)) = \psi
            \;\; \mbox{in $\bM$} \\
%  u = \varphi \;\; \,& \mbox{on $\partial M$}
%\end{aligned} \right.
\end{equation}
satisfying the Dirichlet boundary condition
\begin{equation}
\label{3I-12}
 u = \varphi \;\;  \mbox{on $\partial M$}
\end{equation}
%where $\partial M$ denotes the boundary of $M$.
where $f$ is a smooth symmetric function of $n$ variables,
$\chi$ a smooth $(0,2)$ tensor on $M$, $\nabla^2 u$ denotes the Hessian of 
a function $u \in C^2 (M)$, and 
$\lambda (\nabla^2 u + \chi) = (\lambda_1, \cdots, \lambda_n)$
are the eigenvalues of $\nabla^2 u + \chi$ with respect to the metric $g$. 

Equation~\eqref{3I-10} first received systematic study by 
Caffarelli-Nirenberg-Spruck~\cite{CNS3}
% who treated the Dirichlet problem in $\bfR^n$ 
whose work had been fundamentally influential to 
development of the theory and applications of fully nonlinear elliptic and parabolic equations. There were 
important contributions to the subject 
from many authors including  
Chou-Wang~\cite{CW01}, Dong~\cite{Dong06}, Ivochkina~\cite{Ivochkina85}, 
Ivochkina-Trudinger-Wang~\cite{ITW04}, 
Krylov~\cite{Krylov83a}, Li~\cite{LiYY90}, 
Trudinger~\cite{Trudinger95}, 
Trudinger-Wang~\cite{TW99}, 
Urbas~\cite{Urbas02},  Wang~\cite{Wang94}, Yuan~\cite{Yuan01}, etc. 
Li~\cite{LiYY90} first studied equation (1.1) for $\chi = g$ on closed Riemannian manifolds, 
followed by Urbas~\cite{Urbas02} and 
more recently the author~\cite{Guan14}.
Building on existing results and techniques,
our primary goal in this paper is to further understand the classical solvability of the Dirichlet problem~\eqref{3I-10}-\eqref{3I-12} in $\bfR^n$ and on general Riemannian manifolds. Specifically, we wish to find optimal conditions under which the Dirichlet problem %~\eqref{3I-10}-\eqref{3I-12} 
admits smooth solutions. 
%%%%%%%%%%%%

In order to treat the problem in the framework of elliptic theory, we follow~\cite{CNS3} and assume
the function $f$ to be defined in
a symmetric open and convex cone $\Gamma \subset \bfR^n$ with vertex
at the origin and boundary $\partial \Gamma \neq \emptyset$,
and $\Gamma_n \subseteq \Gamma$ where
\begin{equation}
\label{3I-15}
\Gamma_n \equiv
\{\lambda \in \bfR^n: \mbox{each component $\lambda_i > 0$}\}, 
%\subseteq \Gamma,
\end{equation}
and satisfy the structure conditions introduced in~\cite{CNS3} 
which have become standard in the literature: 
\begin{enumerate}
\item[{\bf (a)}]
the {\em ellipticity} condition
\begin{equation}
\label{3I-20}
f_i = f_{\lambda_i} \equiv \frac{\partial f}{\partial \lambda_i} > 0 \;\;
\mbox{in $\Gamma$}, \;\; 1 \leq i \leq n,
\end{equation}
\item[{\bf (b)}]
the {\em concavity} condition
\begin{equation}
\label{3I-30}
\mbox{$f$ is a concave function in $\Gamma$},
\end{equation}
%\end{enumerate}
%\begin{equation}
%\label{3I-40}
%\delta_{\psi, f} \equiv \inf_M \psi - \sup_{\partial \Gamma} f > 0
%   \;\; \mbox{where} \;
%  \sup_{\partial \Gamma} f \equiv \sup_{\lambda_0 \in \partial M}
%    \limsup_{\lambda \rightarrow \lambda_0} f (\lambda)
%   < \inf_{\Omega} \psi,
%\;\; \forall \, \lambda_0 \in \partial \Omega
%\end{equation}
\item[{\bf (c)}]
the {\em nondegeneracy} condition
\begin{equation}
\label{3I-40}
\delta_{\psi, f} \equiv \inf_M \psi - \sup_{\partial \Gamma} f > 0, 
\end{equation}
\end{enumerate}
where
\[ \sup_{\partial \Gamma} f \equiv \sup_{\lambda_0 \in \partial \Gamma}
                   \limsup_{\lambda \rightarrow \lambda_0} f (\lambda). \]

It was shown in \cite{CNS3} that
condition (\ref{3I-20}) implies that equation (\ref{3I-10}) is elliptic
for a solution $u \in C^2 (\Omega)$ with 
$\lambda (\nabla^2 u + \chi) \in \Gamma$; we call such functions
 {\em admissible}. Condition \eqref{3I-40} ensures that 
equation~\eqref{3I-10} is nondegenerate.
Moreover,  under assumptions \eqref{3I-20} and \eqref{3I-40}
equation~\eqref{3I-10} will become uniformly elliptic once {\em a priori}  second order derivative estimates are established for admissible solutions. 
According to \cite{CNS3}, it follows from \eqref{3I-30} that the function
$F$ defined by $F (A) = f (\lambda [A])$ is concave for
$A \in \mathcal{S}^{n\times n}$ with
$\lambda [A] \in \Gamma$, where $\mathcal{S}^{n\times n}$ is the set of
$n \times n$ symmetric matrices. 
%see \cite{CNS3}. 
Consequently,  
one can apply Evans-Krylov theorem to obtain 
$C^{2, \alpha}$ estimates
from $C^2$ bounds for admissible solutions of \eqref{3I-10}-\eqref{3I-12} 
under assumptions~\eqref{3I-20}-\eqref{3I-40}.
From this point of view 
%Therefore, 
these conditions seem indispensable to the classical solvability of the Dirichlet problem~\eqref{3I-10}-\eqref{3I-12}.
%Our main results will fail if any of these assumptions is dropped. 

In a seminal paper~\cite{CNS3}
Caffarelli-Nirenberg-Spruck
treated the Dirichlet problem~\eqref{3I-10}-\eqref{3I-12} 
in a bounded smooth domain $M$ in Euclidean space $\bfR^n$ with the
geometric property % that there exists $R > 0$ such that
\begin{equation}
\label{3I-70}
(\kappa_1, \ldots, \kappa_{n-1}, R ) \in \Gamma
%(\kappa_{\partial \Omega} (x), R) \in \Gamma, \;\; \forall \; x \in
%\partial \Omega
\;\; \mbox{on $\partial M$}
\end{equation}
for some $R > 0$, where $(\kappa_1, \ldots, \kappa_{n-1})$ are
the principal curvatures of $\partial M$. 
They proved %Theorem~\ref{3I-thm1} 
for $\chi = 0$ the classical solvability
%$f$ satisfing \eqref{3I-20}-\eqref{3I-40} and 
assuming \eqref{3I-20}-\eqref{3I-40}, \eqref{3I-70} and 
the additional conditions on $f$: 
for any $C > 0$ and compact set $K \subset \Gamma$
there is a number $R = R (C, K)$ such that
\begin{equation}
\label{3I-50}
f (\lambda_1, \ldots, \lambda_{n-1}, \lambda_n + R) \geq C
\;\;\mbox{for all $\lambda \in K$},
\end{equation}
and
\begin{equation}
\label{3I-60}
f (R \lambda) \geq C
\;\; \mbox{for all $\lambda \in K$.}
\end{equation}
Li~\cite{LiYY90} considered general $\chi$ and obtained various extensions. 
Later Trudinger~\cite{Trudinger95} was able to remove condition~\eqref{3I-50} using a new
method to derive
second order boundary estimates which will be important to our approach.

Clearly \eqref{3I-20} and \eqref{3I-60} imply that 
\begin{equation}
\label{3I-60a}
\sum f_i \lambda_i \geq 0
\;\; \mbox{in $\Gamma$.}
\end{equation}
According to \cite{CNS3}, from assumptions \eqref{3I-70} and \eqref{3I-60} one can construct {\em strict} admissible subsolutions.
For $\Gamma = \Gamma_n$ condition~\eqref{3I-70} implies that the domain $M$ is strictly convex. 
More generally, it was proved in \cite{CNS3} that when $\Gamma$ is a type 1 cone, i.e. each 
positive $\lambda_i$-axis lies on the boundary of $\Gamma$
(in \cite{Guan14} it was called type 2 by mistake),
a domain in $\bfR^n$ satisfying \eqref{3I-70} must be simply-connected.

We wish to solve the Dirichlet problem~\eqref{3I-10}-\eqref{3I-12} on general Riemannian manifolds with boundary of arbitrary geometric shape, extending the above theorem of \cite{CNS3}. We first state  
our main existence result which is optimal and new for equations in $\bfR^n$ in its generality.
In addition to \eqref{3I-20}-\eqref{3I-40}, we shall
assume % that $f$ satisfies 
\begin{equation}
\label{3I-55a'}
\sum f_i (\lambda) \lambda_i 
     \geq  - \omega_f (|\lambda^-|) \sum f_i \;\; 
\mbox{in $\Gamma (\psi)$}
\end{equation}
for some positive nondecreasing function $\omega_f$ satisfying
\[ \lim_{t  \rightarrow + \infty} 
     \frac{\omega_f (t)}{t} = 0 \]
   %\;\; \mbox{in $\Gamma (\psi)$} \]  
where $\lambda^- = (\lambda_1^-, \ldots, \lambda_n^-)$
and 
\[ \Gamma (\psi) \equiv \Gamma \cap 
    \Big\{\inf_M \psi \leq f \leq \sup_M \psi\Big\}. \]

\begin{theorem}
\label{3I-thm1}
Let $\psi \in C^{k, \alpha} (\bM)$,
$\varphi \in C^{k+2, \alpha} (\partial M)$, $k \geq 2$.
There exists a unique admissible
solution $u \in C^{k+2, \alpha} (\bM)$ of the Dirichlet problem~\eqref{3I-10}-\eqref{3I-12},
provided that \eqref{3I-20}-\eqref{3I-55a'} hold and that
there exists an admissible subsolution
$\ul{u} \in C^2 (\bM)$ satisfying
%$\lambda (\nabla^2 \ul{u}) \in \Gamma$,
\begin{equation}
\label{3I-11s}
\left\{ \begin{aligned}
f (\lambda (\nabla^2 \ul{u} + \chi)) \,& \geq \psi \;\;
             \mbox{in $\bM$} \\
\ul{u} =  \varphi \;\; \,& \mbox{on $\partial M$}.
\end{aligned} \right.
\end{equation}
If both $\psi$ and
$\varphi$ are smooth, $u \in C^{\infty} (\bM)$.
Moreover,  condition~\eqref{3I-55a'} can be removed if the sectional curvatures of $M$ are nonnegative. 
\end{theorem}

In particular, Theorem~\ref{3I-thm1} holds in 
$\bfR^n$ without assumption~\eqref{3I-55a'}.  
We expect that assumption~\eqref{3I-55a'}, which is only needed in deriving the gradient estimates, be removed completely. 
 All other conditions, however, seem necessary for 
Theorem~\ref{3I-thm1} to hold.

\begin{remark}
For fully nonlinear degenerately elliptic equations, the best 
regularity one can hope is $u \in C^{1,1} (\bM)$ in general. This is the case for the degenerate Monge-Amp\`ere equation $\det \nabla^2 u =  0$
which does not satisfy \eqref{3I-40}.
\end{remark}

\begin{remark}
The examples of 
Nadirashvili-Vladut~\cite{NV14} indicate that 
Theorem~\ref{3I-thm1} fails for nonconcave $f$ in dimension $n \geq 5$. 
\end{remark}

\begin{remark}
In Theorem~\ref{3I-thm1} the boundary 
$\partial M$ is assumed to be smooth and compact
but otherwise arbitrary; there are no geometric restrictions
on $\partial M$. Consequently, 
the Dirichlet problem may fail to have admissible 
solutions in $C^2 (\bM)$ without the subsolution assumption.
%which can be viewed as a necessary condition. 
This can easily be seen from the 
Monge-Amp\`ere equation 
$\det \nabla^2 u =  1$ in a nonconvex domain $\Omega \subset \bfR^n$ with constant boundary data. 
\end{remark}

\begin{remark}
There is an interesting example in \cite{CNS3} which 
shows that an equation in the unit ball in $\bfR^2$ satisfying 
conditions~\eqref{3I-20}-\eqref{3I-40} may have a solution
which is not Lipschitz up to the boundary. 
\end{remark}

%under more technical conditions 
%in addition to \eqref{3I-20}-\eqref{3I-40}.
% with nonnegative sectional curvature,
%and more recently by the author~\cite{Guan14}.

In order to prove Theorem ~\ref{3I-thm1} it is critical to establish the {\em a priori} $C^2$ estimate
\begin{equation}
\label{hess-a10C2}
|u|_{C^2 (\bar{M})} \leq C
\end{equation}
for admissible solutions; higher order estimates will follow from 
Evans-Krylov theorem and the classical Schauder theory 
for uniformly elliptic linear equations, and the existence can be 
proved using the standard continuity method. 
There are three key ingredients which we formulate below
in Theorem~\ref{3I-thm2}. 
In the rest of this paper we assume 
$\psi \in C^{\infty} (\bM)$ and
$\varphi \in C^{\infty} (\partial M)$. Let 
$u \in C^{4} (\bM)$ be an admissible solution of
the Dirichlet problem~\eqref{3I-10}-\eqref{3I-12}, and 
\[ \omega (u) = \sup_M u - \inf_M u. \] 

\begin{theorem}
\label{3I-thm2}
Under the assumptions \eqref{3I-20}-\eqref{3I-40} and \eqref{3I-11s}, $u$ satisfies the following estimates:
%\begin{itemize}
%\item[{\bf (i)}]
 {\bf (i)}
a maximum principle for second order derivatives
\begin{equation}
\label{hess-a10}
\max_{\bar{M}} |\nabla^2 u| \leq
 C_1 e^{C_2 \omega (u)}  \Big(1 + \max_{\bM} |\nabla u|^2 
     + \max_{\partial M}|\nabla^2 u|\Big), 
     %e^{C_2 \omega (u)} 
\end{equation}
%\end{itemize}
%\item[{\bf (ii)}]
{\bf (ii)}
the boundary estimate for second order derivatives
%Suppose moreover that $\ul{u} = \varphi$ on $\partial M$. 
\begin{equation}
\label{hess-a10b}
\max_{\partial M}|\nabla^2 u| \leq C_3 
    = C_3 (|u|_{C^1 (\bM)}),
\end{equation}
%\item[{\bf (iii)}]
{\bf (iii)}
a maximum principle for the gradient if in addition 
\eqref{3I-55a'} holds
\begin{equation}
\label{hess-a10g}
\max_{\bar{M}} |\nabla u| \leq
 C_4 %e^{C_2 \omega (u)}  
   (1 + \omega (u)) + C_5 \max_{\partial M} |\nabla u|
\end{equation}
where $C_1$, $C_2$, $C_4$ and $C_5$ are uniform constants  independent of $u$. 
\end{theorem}

\begin{remark} 
The boundary condition $u = \ul u = \varphi$ on $\partial M$ is not needed in the global estimates \eqref{hess-a10} and \eqref{hess-a10g}
\end{remark}

In comparison to work in \cite{CNS3}, there are some nontrivial new difficulties to derive these estimates on general manifolds with boundary of arbitrary geometric shape.
More technically, in deriving the estimates a critical issue is to control some key terms involving the Riemannian curvature of $M$ and in addition, for the boundary estimate 
\eqref{hess-a10b}, 
the second fundamental form of $\partial M$. 
We wish to seek methods and technical tools which enable us to overcome these difficulties and can be used to study more general equations. 
Building on previous results (see e.g. \cite{Guan14} and references therein), we 
achieve our goal by further understanding the roles that the subsolution and the concavity property~\eqref{3I-30} can play in these estimates.

In the theory of fully nonlinear elliptic equations, 
a cornerstone is Evans-Krylov theorem which crucially relies on
concavity of the equation. 
In a sense our work provides some evidence from another point of view of the importance of the concavity assumption which as we shall see plays key roles in all estimates in Theorem~\ref{3I-thm2}.
(This is somewhat different from previous approaches.)

For $\sigma \in (\hat{\sigma}, \tilde{\sigma})$ let
$\Gamma^{\sigma} = \{\lambda \in \Gamma: f (\lambda) > \sigma\}$, where
\[ \hat{\sigma} = \sup_{\partial \Gamma} f, \;\;
    \tilde{\sigma} = \sup_{\Gamma} f. \]
By conditions~\eqref{3I-20} and \eqref{3I-30} the level set  $\partial \Gamma^{\sigma} = \{\lambda \in \Gamma: f (\lambda) = \sigma\}$ 
is a smooth and convex non-compact
complete hypersurface in $\bfR^n$. 
For $\lambda \in \Gamma$ let 
\[ \nu_{\lambda} := \frac{Df (\lambda)}{|Df (\lambda)|} \]
denote the unit normal vector to 
$\partial \Gamma^{f (\lambda)}$ at $\lambda$. 
The following observation, while rather elementary, will be remarkably important to our proof of all estimates in 
Theorem~\ref{3I-thm2}.

\begin{lemma}
\label{ma-lemma-C10}
Given $\mu \in \Gamma$ and $\beta > 0$ 
there exists uniform constant $\varepsilon > 0$ such that for any $\lambda \in \Gamma$ with 
$|\nu_{\mu}- \nu_{\lambda}| \geq  \beta$, 
\begin{equation}
\label{gj-C160'}
  \sum f_i (\lambda) (\mu_i - \lambda_i)
   \geq f (\mu) - f (\lambda) + 
            \varepsilon \sum f_i (\lambda)  + \varepsilon.
\end{equation}
\end{lemma}

Since $f$ is concave this inequality %\eqref{gj-C160'}
always holds for $\varepsilon = 0$. 
In next sections we shall see how this property can be combined with the subsolution assumption to form a powerful tool for us to derive the desired estimates where both of the extra terms for $\varepsilon > 0$ will play key roles.
Note that  Lemma~\ref{ma-lemma-C10} is purely a property of the function $f$;
so it could be useful in other fields such
as convex geometry and optimal control theory. 

Subsolutions and  supersolutions are important concepts in the theory of partial differential equations.  
The classical method of subsolutions and supersolutions for semilinear elliptic equations, for instance, asserts roughly that the
existence of a subsolution and supersolution implies 
existence of solution. 
For quasilinear or fully nonlinear elliptic equations,
most of the known existence results on the Dirichlet problem also seem to
rely on existence of subsolutions. 
%which can be viewed as a
%necessary condition since a solution is also a subsolution. 
In classical papers such as \cite{Serrin69} on the equation of prescribed mean curvature, and 
~\cite{CNS1, CKNS, CNS3} on Monge-Amp\`ere and more general fully nonlinear equations, 
geometric conditions on the boundary were usually imposed
and used to construct subsolutions or local barriers near boundary. 
It is often more flexible 
in applications, especially to geometric problems, 
to use the subsolution assumption in place of geometric conditions on boundary, 
which was a major motivation for Hoffman-Rosenberg-Spruck~\cite{HRS} to introduce the idea. 
In the papers \cite{GS02} and \cite{TW02} on a Plateau type problem for hypersurafces of constance Gauss curvature,
for instance,
it was proved that a subsolution-like assumption could eliminate all topological obstructions which were shown nontrivial by Rosenberg~\cite{Rosenberg93}.

The existence of a subsolution immediately provides a lower bounds for solutions and gradient bounds on boundary. Inspired by geometric applications, 
Hoffman-Rosenberg-Spruck~\cite{HRS} first  
used subsolutions in second order boundary estimates,
and the idea was further developed by Spruck and the author~\cite{GS93, Guan98a, Guan98b} (see also \cite{GL96}) 
for real and complex Monge-Amp\`ere equations. 
The technique has found
interesting applications in geometric problems such as
Danoldson's conjecture~\cite{Donaldson99} on geodesics in the space of K\"ahler metrics (e.g.~\cite{Chen00, Blocki, PS10}) and Chern-Levine-Nirenberg conjecture~\cite{CLN69} on 
intrinsic norms \cite{GuanPF02}; see also \cite{GS02} and \cite{TW02}. 
In this paper, we make use of the subsolution combined with 
Lemma~\ref{ma-lemma-C10}
in all estimates in Theorem~\ref{3I-thm2}.  
It seems the first time for a gradient estimate to rely crucially on the concavity assumption.
Even for the Dirichlet problems in $\bfR^n$,  the boundary estimate \eqref{hess-a10b} is new in terms of its generality. 

Besides the most studied examples  
$f = \sigma_k^{\frac{1}{k}}$ and
$f = (\sigma_k/\sigma_l)^{\frac{1}{k-l}}$, 
$1 \leq l <  k \leq n$ defined in the Garding cone
\[  \Gamma_k = \{\lambda \in \bfR^n: \sigma_j (\lambda) > 0 ,\;
\mbox{$\forall$ $1 \leq j \leq k$}\} \]
where $\sigma_k$ is the $k$-th elementary symmetric
polynomial, 
there are other interesting functions which satisfy conditions~\eqref{3I-20} and \eqref{3I-30},  such as 
$f = \log P_k$ where
\[ P_k (\lambda) := \prod_{i_1 < \cdots < i_k}
(\lambda_{i_1} + \cdots + \lambda_{i_k}), \;\; 1 \leq k \leq n\]
defined in the cone
\[ \mathcal{P}_k : = \{\lambda \in \bfR^n:
      \lambda_{i_1} + \cdots + \lambda_{i_k} > 0\},  \]
and for $\alpha \leq 1$, $\alpha \neq 0$,
%\begin{equation}
%\label{3I-320}
 \[ f (\lambda) = \alpha \sum \lambda_i^{\alpha}, 
\;\; \lambda \in \Gamma_n, \]
%\end{equation}
The function 
%$f = \sigma_n^{\frac{1}{n}}$ corresponds to 
%the Monge-Amp\`ere equation, while 
$f = \log P_{n-1}$ is 
connected to a conjecture of Gauduchon in non-K\"ahler 
complex geometry and form-type Calabi-Yau equations; see e.g. \cite{FWW11, FWW15, TW17, STW17, GN}.

The current paper is a revision of \cite{Guan13a} 
which contains both Lemma~\ref{ma-lemma-C10} and  
the second order estimates \eqref{hess-a10} and 
\eqref{hess-a10b} in Theorem~\ref{3I-thm2}.
The gradient estimate \eqref{hess-a10g} 
which is substantial to the proof of Theorem~\ref{3I-thm1} 
was carried out more recently. Note that the gradient estimate does not follow from the rescaling method of Dinew-Kolodziej~\cite{DK} as it would require in place of 
\eqref{hess-a10b} 
a second order boundary estimate of the form
\begin{equation}
\label{hess-a10b'}
\max_{\partial M} |\nabla^2 u| 
    \leq C \max_{\bM} |\nabla u|^2 + C. 
\end{equation}
It would be interesting to derive such estimates. 

Since \cite{Guan13a} was posted in arXiv in early 2014 there 
have appeared several interesting papers, e.g. 
\cite{Sze18, CJY, CPW17} which seem to have been influenced by ideas from \cite{Guan14, Guan13a}.
In \cite{Sze18}, Sz\'ekelyhidi introduced a notion of 
extended subsolutions, called $\cC$-{\em subsolutions}, for equations on closed Hermitian manifolds. 
At the end of the paper we shall add an appendix to  discuss briefly his condition
for equation~\eqref{3I-10} on Riemannian manifolds, and show that it agrees for type 1 cones 
%the $\cC$-subsolution condition \cite{Sze18} is equivalent 
with a condition earlier introduced in \cite{Guan14}, 
therefore clarify relations between the two notions.

Besides the appendix the rest of this paper is divided into three sections in which 
we  establish estimates \eqref{hess-a10}, \eqref{hess-a10b} and \eqref{hess-a10g}, 
respectively.

The author wished to thank Jiaping Wang and Xiangwen Zhang for stimulating communications and especially for pointing out mistakes in our previous attempts to prove the gradient estimates, 
and to thank 
Heming Jiao, Shujun Shi and Zhenan Sui for fruitful discussions and their contributions to our joint 
papers~\cite{GJ, GSS15} where part of results in \cite{Guan13a} were extended to more general elliptic and parabolic equations.

\bigskip

\section{Global estimates for second derivatives}
\label{3I-C}
\setcounter{equation}{0}

\medskip

In this section we derive the global
second order estimates \eqref{hess-a10}.
% under hypotheses (\ref{3I-20})-(\ref{3I-40}) and \eqref{3I-11s}.
The first half closely follows the argument in
\cite{Guan14}, so we shall only give an outline for completeness while keeping track the explicit dependence on 
$|\nabla u|_{C^0 (\bM)}$,
followed by our new ideas which will also be
crucial in the following sections.
As in \cite{Guan14} one of the key points is to employ the 
subsolution in construction of test functions. 
Consider as in \cite{Guan14} 
\[ W = \max_{x \in \bar{M}} \max_{\xi \in T_x M^n, |\xi| = 1}
  (\nabla_{\xi \xi} u + \chi (\xi, \xi)) e^{\eta} \]
where $\eta$ is a function to be determined.
Suppose $W$ is achieved at an interior point
$x_0 \in M$ for some unit vector $\xi \in T_{x_0}M^n$
and choose smooth orthonormal local frames $e_1, \ldots, e_n$
about $x_0$ such that $e_1 = \xi$, $\nabla_i e_j = 0$
and $\{\nabla_{ij} u +  \chi_{ij}\}$ is diagonal at $x_0$.

Write $U = \nabla^2 u +  \chi$ and $U_{ij} = \nabla_{ij} u +  \chi_{ij}$.
At the point $x_0$ we have for $1 \leq i \leq n$, %$i = 1, \ldots, n$,
\begin{equation}
\label{hess-a30}
\frac{\nabla_i U_{11}}{U_{11}} + \nabla_i \eta = 0,
%\mbox{for each $1 \leq i \leq n$},
\end{equation}
\begin{equation}
\label{hess-a40}
\begin{aligned}
\frac{\nabla_{ii} U_{11}}{U_{11}}
   - \Big(\frac{\nabla_i U_{11}}{U_{11}}\Big)^2 + \nabla_{ii} \eta \leq 0.
\end{aligned}
\end{equation}

Next, write equation~\eqref{3I-10} in the form
\begin{equation}
\label{3I-10'}
F (U) := f (\lambda (U)) = \psi
\end{equation}
and 
\[ F^{ij} = F^{ij} (U) = \frac{\partial F}{\partial U_{ij}} (U).\]
Differentiating equation~\eqref{3I-10'} twice yields
\begin{equation}
\label{hess-a60}
F^{ij} \nabla_k U_{ij} =  \nabla_k \psi, \;\; \mbox{for all $k$},
\end{equation}
\begin{equation}
\label{hess-a65}
\begin{aligned}
F^{ij} \nabla_{11} U_{ij} + \sum F^{ij, kl} \nabla_1 U_{ij} \nabla_1 U_{kl}
 = \nabla_{11} \psi.
\end{aligned}
\end{equation}
%Throughout the paper we write $F^{ij} = F^{ij} (\{U_{ij}\})$.
Recall the formula
\begin{equation}
\label{hess-A80}
\begin{aligned}
\nabla_{ijkl} v - \nabla_{klij} v
\,& = R^m_{ljk} \nabla_{im} v + \nabla_i R^m_{ljk} \nabla_m v
      + R^m_{lik} \nabla_{jm} v \\
  + \,& R^m_{jik} \nabla_{lm} v
      + R^m_{jil} \nabla_{km} v + \nabla_k R^m_{jil} \nabla_m v.
\end{aligned}
\end{equation}
Therefore,
%By  (\ref{hess-a65}) and (\ref{hess-A80}),
\begin{equation}
\label{hess-a68}
\begin{aligned}
F^{ii} \nabla_{ii} U_{11}
 \geq \,& F^{ii} \nabla_{11} U_{ii} - C (|\nabla u| + U_{11}) \sum F^{ii}.
%  \geq \,& - F^{ij, kl} \nabla_1 U_{ij} \nabla_1 U_{kl} + \nabla_{11} \psi
% - C (1 + \lambda_1) \sum_i F^{ii}.
\end{aligned}
\end{equation}
In the proof the constant $C$, which may change from line to line, will be
independent of $|u|_{C^1 (\bM)}$.
 From (\ref{hess-a40}), (\ref{hess-a65}) and (\ref{hess-a68})
we derive
\begin{equation}
\label{hess-a71}
\begin{aligned}
U_{11} F^{ii} \nabla_{ii} \eta
  \leq \,& E - \nabla_{11} \psi +  C (|\nabla u| + U_{11}) \sum F^{ii}
\end{aligned}
\end{equation}
where
\[ E \equiv  F^{ij, kl} \nabla_1 U_{ij} \nabla_1 U_{kl}
           + \frac{1}{U_{11}} F^{ii} (\nabla_i U_{11})^2. \]

%we follow the idea of Urbas~\cite{Urbas02}.
Let $J = \{i: 3 U_{ii} \leq - U_{11}\}$. 
%$0 < s < 1$  (to be chosen) and
%\[  \begin{aligned}
%J \,& = \{i: 3 U_{ii} \leq - U_{11}\}, \;\;
%J & = \{U_{ii} > - a U_{11}, \; F^{ii} > a^{-1} F^{11}\}, \\
%K & = \{i: U_{ii} > a U_{11}, \; F^{ii} < a^{-1} F^{11}, \; i \neq 1\}, \\
%K  = \{i> 1: 3 U_{ii} > - U_{11}\}.
%  \end{aligned} \]
We may modify the estimate for $E$ in \cite{Guan14},
which uses an idea of Urbas~\cite{Urbas02},
%using the formula
%\begin{equation}
%\label{hess-A70}
% \nabla_{ijk} v - \nabla_{jik} v = R^l_{kij} \nabla_l v
%\end{equation}
to derive
\begin{equation}
\label{hess-a271}
\begin{aligned}
 E \leq \,& U_{11} \sum_{i \in J} F^{ii} (\nabla_i \eta)^2
             + C U_{11} F^{11} \sum_{i \notin J} (\nabla_i \eta)^2
                + \frac{C(1+ |\nabla u|^2)}{U_{11}} \sum F^{ii}.
\end{aligned}
\end{equation}

As in \cite{Guan14} we choose the function $\eta$ of the form
\[ \eta = \phi (t) + a (\ul{u} - u) \]
where $t = 1+ |\nabla u|^2$, $\phi$ is a positive function, $\phi' > 0$, %$\phi'' > 0$,
and $a \geq 1$ is a constant. Then
\[ \begin{aligned}
 \nabla_i \eta
 %  = \,& 2 \phi' \nabla_{k} u \nabla_{ik} u + a \nabla_i (\ul{u} - u) \\
   = \,& 2 \phi' (U_{ii} \nabla_i u - \chi_{ik} \nabla_{k} u)
        + a \nabla_i (\ul{u} - u),
 \end{aligned}   \]
\[  \begin{aligned}
  \nabla_{ii} \eta
   = \,&  2 \phi' (\nabla_{ik} u \nabla_{ik} u
          + \nabla_{k} u \nabla_{iik} u)
          + 4 \phi'' (\nabla_{k} u \nabla_{ik} u)^2 %\nabla_l u \nabla_{il} u
          + a \nabla_{ii} (\ul{u} - u).
\end{aligned} \]
Therefore, 
\begin{equation}
\label{hess-a272}
\begin{aligned}
  \sum_{i \in J} F^{ii} (\nabla_i \eta)^2
    \leq  C t (\phi')^2 \sum_{i \in J} F^{ii} U_{ii}^2 
        + C t a^2 \sum_{i \in J} F^{ii},
\end{aligned}
\end{equation}
\begin{equation}
\label{hess-a272.5}
 \sum_{i  \notin J} (\nabla_i \eta)^2
\leq  C t (\phi')^2 U_{11}^2 + C t a^2.
\end{equation}
and by \eqref{hess-a60},
 \begin{equation}
\label{hess-a273}
 \begin{aligned}
 F^{ii} \nabla_{ii} \eta
  %= \,&  2 \phi' F^{ii} (\nabla_{ik} u \nabla_{ik} u
  %       + \nabla_{k} u \nabla_{iik} u) \\
  %    &  + 2 \phi'' F^{ii} \nabla_{k} u \nabla_{ik} u \nabla_l u \nabla_{il} u
  %       + A F^{ii} \nabla_{ii} (\ul{u} - u) \\
\geq \,&  \phi'  F^{ii} U_{ii}^2 + 4 \phi'' F^{ii} (\nabla_{k} u \nabla_{ik} u)^2
          + a F^{ii} \nabla_{ii} (\ul{u} - u) \\
       &   - C \phi' |\nabla u|^2 \sum F^{ii} - C \phi' |\nabla u|.
\end{aligned}
\end{equation}

%Without loss of generality we assume 
Let $b_1 = \max_{\bM} (1 + |\nabla u|^2)$, $b = \gamma/b_1$ and
$\phi (t) = - \log (1 - b t)$, where $\gamma \in (0, 1/2]$ will be chosen small enough.
% where $b > 0$ is a constant sufficiently small so that
%$8 b (1+t)^2 \leq 1$ for all $0 \leq t \leq \max_{\bM} |\nabla u|^2$.
%\[ 2 b = \frac{1}{1 + \max_{\bM} |\nabla u|^2}. \]
We have $\phi' = \sqrt{\phi''} = b/(1-bt)$ and therefore,
%\[ \phi' = \frac{b}{1 - b t^2}, \;\;
%    \phi'' = (\phi')^2. \]%\frac{2b}{1 - b t^2}
              %+ \frac{4b^2 t^2}{ (1 - b t^2)^2}. \]
%\frac{2 b + 2 b^2 t^2}{ (1 - b t^2)^2} \]
%and therefore
%\[ \phi'' - 4 (\phi')^2 = \frac{2b}{1 - b (1+t)^2} > 0. \]
%\[ \phi'' - 4 (\phi')^2
%     = \frac{2 b - 14 b^2 t^2}{ (1 - b t^2)^2} > 0,
%   \;\; \forall \, 1 \leq t \leq b_1. \] %\max_{\bM} |\nabla u|^2.\]
%$\phi (t) = b (1+t)^2$;
%We may assume $\phi'' - 4 (\phi')^2 \geq 0$ in
%any fixed interval $[0, C_1]$ by requiring $b > 0$ sufficiently small.
combining \eqref{hess-a71} and
%\eqref{hess-a271}, \eqref{hess-a272},\eqref{hess-a272.5} and
 \eqref{hess-a271}-\eqref{hess-a273},
%and \eqref{hess-a274}
 \begin{equation}
\label{hess-a279}
 \begin{aligned}
\phi'  F^{ii} U_{ii}^2 +  a F^{ii} \nabla_{ii} (\ul{u} - u)
    \leq \,&  - \frac{\nabla_{11} \psi}{U_{11}}  
                  + C t a^2 \sum_{i \in J} F^{ii}  +  C t a^2 F^{11} \\
                + C \phi' |\nabla u|  + C (1 + t \phi' + t U_{11}^{-2}) \sum F^{ii}   
       \,&  + C t (\phi')^2 F^{ii} U_{ii}^2 
\end{aligned}
\end{equation}
where %$t = 1 + |\nabla u|^2$ and
$C$ is independent of $|\nabla u|_{C^0 (\bM)}$.

So far we have essentially followed \cite{Guan14} except
the choice of  function $\phi$ in order to obtain the desired 
dependence on $|\nabla u|_{C^0 (\bM)}$. Our new ideas
in the proof are present below.

Write $\mu (x) = \lambda (\nabla^2 \ul u (x) +  \chi (x))$ and
note that $\{\mu (x): x \in \bM\}$ is a compact subset of $\Gamma$.
There exists a uniform constant 
$\beta \in (0, \frac{1}{2 \sqrt{n}})$ such that
\begin{equation}
\label{gj-C145}
%\mu^{\epsilon} (x) \in \Gamma, \;\;
     \nu_{\mu (x)} -  2 \beta {\bf 1} \in \Gamma_n,
     \;\;  \forall \, x \in \bM.
\end{equation}
Recall that for $\lambda \in \Gamma$,
$\nu_{\lambda} = Df (\lambda)/|Df (\lambda)|$ 
is the unit normal vector to $\partial \Gamma^{f (\lambda)}$.
%the level hypersurface 
%\[ \partial \Gamma^{f (\lambda)} 
 %     = \{\zeta \in \Gamma: f (\zeta) = f (\lambda)\} \]
 
Let $\mu = \mu (x_0)$ and $\lambda = \lambda (U (x_0))$.
We first consider the case 
$|\nu_{{\mu}}- \nu_{{\lambda}}| \geq  \beta$.
We apply Lemma~\ref{ma-lemma-C10} 
to obtain from \eqref{hess-a279}
either
 \begin{equation}
\label{hess-a279''}
 \begin{aligned}
\varepsilon a - C \phi' |\nabla u| + 
(\varepsilon a - C (1 + t \phi' + t U_{11}^{-2})) \sum F^{ii} 
    \leq 0 
\end{aligned}
\end{equation}
or 
\begin{equation}
\label{hess-a279'}
  \phi' (1 - C t \phi')  F^{ii} U_{ii}^2 
         \leq  C t a^2 \sum_{i \in J} F^{ii}
           + C t a^2 F^{11}.
\end{equation}
Here we have used the fact (\cite{Guan14})
\[ F^{ii} (\nabla_{ii} \ul u - \nabla_{ii} u) \geq
   \sum f_i (\mu_i - \lambda_i). \]
Note that
\begin{equation}
\label{hess-a274}
 F^{ii} U_{ii}^2 \geq  F^{11} U_{11}^2 + \sum_{i \in J} F^{ii} U_{ii}^2
   \geq F^{11} U_{11}^2 + \frac{U_{11}^2}{9} \sum_{i \in J} F^{ii}.
 \end{equation}
Fixing $a$ sufficiently large and $\gamma$ sufficiently small, 
both independent of $|\nabla u|_{C^0 (\bM)}$,
we obtain either $U_{11} (x_0) \leq C \sqrt{b_1}$ from \eqref{hess-a279''} 
or $U_{11} (x_0) \leq C b_1$ from \eqref{hess-a279'}.  

Suppose now that 
$|\nu_{\mu}- \nu_{\lambda}| <  \beta$.
It follows that
$\nu_{\lambda} - \beta {\bf 1} \in \Gamma_n$ and
therefore
\begin{equation}
\label{hess-a276}
 F^{ii} \geq \frac{\beta}{\sqrt{n}} \sum F^{kk},
    \;\; \forall \, 1 \leq i \leq n.
\end{equation}
Since $\sum F^{ii} (\nabla_{ii} \ul u - \nabla_{ii} u) \geq 0$
by the concavity of $f$,
we obtain from \eqref{hess-a279} and \eqref{hess-a276}  (when $\gamma$
is small enough),
 \begin{equation}
\label{hess-a278}
 \begin{aligned}
\frac{\beta}{\sqrt{n}}  |{\lambda}|^2 \sum F^{ii}
\leq \sum F^{ii} {\lambda}_i^2 %U_{ii}^2
    \leq \,& C b_1^2 a^2 \sum F^{ii}  + C b_1
\end{aligned}
\end{equation}
where $|{\lambda}|^2  = \sum U_{ii}^2$.
 By the concavity of $f$ again,
\[ \begin{aligned}
  |{\lambda}| \sum F^{ii}
  \geq \,& f (|{\lambda}| {\bf 1}) - f ({\lambda})
        + \sum F^{ii} {\lambda}_i \\
  \geq \,& f (|{\lambda}| {\bf 1}) - f ({\mu})
                 - \frac{1}{4 |{\lambda}|}
             \sum F^{ii} {\lambda}_{i}^2 - |{\lambda}| \sum F^{ii}.
    \end{aligned} \]
Therefore,
\begin{equation}
\label{hess-a300}
 |{\lambda}|^2 \sum F^{ii} \geq \frac{|{\lambda}|}{2}
      (f (|{\lambda}| {\bf 1}) - f ({\mu}))
         - \frac{1}{8} \sum F^{ii} {\lambda}_{i}^2.
\end{equation}

Suppose $|{\lambda}| \geq 1 + \max_{x \in \bM} |\mu (x)|
\equiv \Lambda$
and let
\[ b_0 \equiv
    f (\Lambda {\bf 1}) - \max_{x \in \bM} f (\mu (x)) > 0. \]
We derive from \eqref{hess-a278} and \eqref{hess-a300} that
\begin{equation}
\label{hess-a310}
 |{\lambda}|^2 \sum F^{ii} + b_0 |{\lambda}|
  \leq C b_1^2 a^2 \sum F^{ii}  + C b_1.
\end{equation}
This gives a bound $|{\lambda}|\leq C b_1$.
The proof of \eqref{hess-a10} is complete.

\bigskip

\section{Second order boundary estimates}
\label{3I-B}
\setcounter{equation}{0}

\medskip

In this section we establish the boundary estimate \eqref{hess-a10b}.
We shall continue to use notations from the
previous section, and assume throughout the section
 that the function $\varphi \in C^{\infty} (\partial M)$ is extended
smoothly to $\bM$, still denoted $\varphi$.

For a point $x_0$ on $\partial M$, we shall choose
smooth orthonormal local frames $e_1, \ldots, e_n$ around $x_0$ such that $e_n$ is the interior normal to $\partial M$ along the boundary.
Let $\rho (x)$ and  $d (x)$ denote the distances from $x \in \bM$ to $x_0$
and $\partial M$, respectively.
%\[ \rho (x) \equiv \mbox{dist}_{M^n} (x, x_0),  \;\; d (x) \equiv %\mbox{dist} (x, \partial M)\]
We may choose $\delta_0 > 0$ sufficiently small such that
$\rho$ and $d$
%the distance function to $\partial M$
are smooth in $M_{\delta_0} = \{x \in M : \rho (x) < \delta_0 \}$.
By a straightforward calculation (see also \cite{Guan14}),
%for all $1 \leq k \leq n$,
\begin{equation}
\label{hess-E170}
\begin{aligned}
|F^{ij} \nabla_{ij} \nabla_k u|
%\leq \,& 2 F^{ij}  \Gamma_{ik}^l \nabla_{jl} u  + C \Big(1 + \sum F^{ii}\Big) \\
\leq \,& C \Big(1 + \sum f_i |\lambda_i|\Big) + C (1 + |\nabla u|) \sum f_i,
%\leq \,& \epsilon \sum f_i \lambda_i^2 + \frac{C}{\epsilon} \sum f_i + C,
 \;\; \forall \; 1 \leq k \leq n.
\end{aligned}
\end{equation}
%where $\lambda = \lambda (U)$.

From the boundary condition $u = \varphi$
on $\partial M$ we derive directly
the pure tangential second derivative bound
\begin{equation}
\label{hess-E130}
|\nabla_{\alpha \beta} u (x_0)| \leq  C (1 + |\nabla u|),  \;\; \forall \; 1 \leq \alpha, \beta < n.
%\;\;\mbox{on} \; \partial M
\end{equation}
To estimate the rest of
%mixed tangential-normal and pure normal
second derivatives we use  the following barrier function
%as in \cite{Guan12a},
\begin{equation}
\label{hess-E176}
 \varPsi
   = A_1 (u - \ul{u})  + A_2 \rho^2 + \sqrt{b_2} - w + A_3 (d - N d^2)
    %\sum_{l< n} |\nabla_l (u - \varphi)|^2
             % \pm \nabla_{\alpha} (u - \varphi)
\end{equation}
where %$b_2 = \max_{\bM} (1 + |\nabla (u - \varphi)|^2)$ and 
\begin{equation}
\label{ma-E85}
\left\{
%\begin{cases}
\begin{aligned}
%v = \,&  d - \frac{N d^2}{2}, \\
b_2 = & \max_{\bM} (1 + |\nabla (u - \varphi)|^2) \\
w = \,& \Big(b_2 + \sum_{l< n} |\nabla_l (u - \varphi)|^2\Big)^{\frac{1}{2}}.
\end{aligned}
%\end{cases}
\right.
\end{equation}
%As in \cite{Guan12a}

The following lemma is key to our proof.

\begin{lemma}
\label{ma-lemma-E10}
%Assume that \eqref{3I-20}-\eqref{3I-40}
%and \eqref{3I-11s} hold.
For any constant $K > 0$ and a function $h \in C (\ol{M_{\delta_0}})$ satisfying $h \leq C \rho^2$
on $\ol{M_{\delta_0}} \cap \partial M$ and $h \leq C$
on $\ol{M_{\delta_0}}$, 
there exist uniform positive constants
$t, \delta$ sufficiently small, and $A_1$, $A_2$, $A_3$,
$N$ sufficiently large
such that $\varPsi \geq h$ on $\partial M_{\delta}$ and
\begin{equation}
\label{ma-E86}
F^{ij} \nabla_{ij} \varPsi  \leq - K  \Big(1 + \sum f_i |\lambda_i|\Big) 
  + K (1 + |\nabla u|) \sum f_i \;\; \mbox{in $M_{\delta}$}.
\end{equation}
\end{lemma}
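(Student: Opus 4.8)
The plan is to construct the barrier $\varPsi$ term by term, starting from the auxiliary function $v$ in \eqref{ma-E85} and then adding the correction terms $A_2\rho^2$ and $-A_3\sum_{l<n}|\nabla_l(u-\varphi)|^2$. First I would compute $F^{ij}\nabla_{ij}v$. Since $\nabla^2(u-\ul u) = (\nabla^2 u + \chi) - (\nabla^2\ul u + \chi) = U - \ul U$, concavity of $F$ gives $F^{ij}(U_{ij} - \ul U_{ij}) \geq 0$ — but this is too weak near points where $\lambda(U)$ is far from $\lambda(\ul U)$. The key is to exploit the two-case dichotomy already used in Section~\ref{3I-C}: either $|\nu_{\tilde\lambda} - \nu_{\tilde\mu}| \geq \beta$, in which case Lemma~\ref{ma-lemma-C10} yields $\sum F^{ij}(U_{ij}-\ul U_{ij}) \geq F^{ij}(\ul U_{ij} - U_{ij})$-type lower bound of the form $\varepsilon\sum f_i + \varepsilon$ (giving control of $\sum f_i|\lambda_i|$ via the elementary inequality $\sum f_i|\lambda_i| \leq \sum f_i\lambda_i + 2\sum_{\lambda_i<0} f_i|\lambda_i|$ combined with \eqref{3I-55'}); or $|\nu_{\tilde\lambda} - \nu_{\tilde\mu}| < \beta$, in which case \eqref{hess-a276}-type ellipticity $F^{ii} \geq \frac{\beta}{\sqrt n}\sum F^{kk}$ holds, and \eqref{3I-55'} directly controls $\sum f_i|\lambda_i|$ by $\sum f_i$. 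The terms $td$ and $-\tfrac{Nd^2}{2}$ contribute $F^{ij}\nabla_{ij}(td - \tfrac{Nd^2}{2}) \leq Ct\sum f_i - N F^{ij}\nabla_i d\,\nabla_j d + CNd\sum f_i$; the crucial gain is the negative term $-NF^{nn}(\nabla_n d)^2 \sim -N F^{nn}$, which is comparable to $-\tfrac{N\beta}{\sqrt n}\sum f_i$ in the second case, giving room to dominate the right-hand side once $N$ is large.

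Next I would handle $A_2\rho^2$: one has $F^{ij}\nabla_{ij}\rho^2 = 2F^{ij}\nabla_i\rho\,\nabla_j\rho + 2\rho F^{ij}\nabla_{ij}\rho \geq 2\sum f_i - C\rho\sum f_i \geq \sum f_i$ on $M_\delta$ for $\delta$ small, so this term is used to absorb lower-order errors and to ensure $\varPsi \geq h$ on the spherical part $\partial M_\delta \cap M$ (where $\rho = \delta$ is bounded below). For the term $-A_3\sum_{l<n}|\nabla_l(u-\varphi)|^2$, I would expand using $\nabla_{ij}(\nabla_l(u-\varphi))^2 = 2\nabla_i\nabla_l(u-\varphi)\,\nabla_j\nabla_l(u-\varphi) + 2\nabla_l(u-\varphi)\,\nabla_{ij}\nabla_l(u-\varphi)$; the first (square) term gives $-2A_3\sum_{l<n}F^{ij}\nabla_i\nabla_l(u-\varphi)\nabla_j\nabla_l(u-\varphi) \leq 0$, which is harmful, but it is controlled because $\nabla_l(u-\varphi)$ vanishes on $\partial M$ so $|\nabla_l(u-\varphi)| \leq Cd$, and by the estimate \eqref{hess-E170} the second term is bounded by $CA_3 d(1 + \sum f_i|\lambda_i| + \sum f_i)$; meanwhile on $\partial M$ one shows $\sum_{l<n}|\nabla_l(u-\varphi)|^2 \leq C\rho^2$ so this negative term does not spoil $\varPsi \geq h \ (\leq C\rho^2)$ on the flat part $\partial M_\delta \cap \partial M$.

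The main obstacle, and the step I would be most careful with, is the ordering of constant choices: $K$ is given first; then one must choose $N$ large (using the $-NF^{nn}$ gain against the $K$-sized right-hand side, which forces the second-case ellipticity analysis), then $\delta$ and $t$ small (so that the $CNd$ and $Ct$ errors and the $A_3 d$ error from \eqref{hess-E170} are reabsorbed — note $td - \tfrac{Nd^2}{2} \geq 0$ needs $d \leq 2t/N$, i.e. $\delta$ small relative to $t/N$), then $A_3$ of moderate size to dominate the tangential derivative terms in the boundary inequality, and finally $A_1$ and $A_2$ large to win the remaining positivity both in $M_\delta$ and on $\partial M_\delta$. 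One must verify these choices are not circular — in particular that $A_1 F^{ij}\nabla_{ij}v$ alone already delivers a term of order $-A_1\varepsilon(1 + \sum f_i + \sum f_i|\lambda_i|)$ in the first case (via Lemma~\ref{ma-lemma-C10}) and of order $-A_1 N\frac{\beta}{\sqrt n}\sum f_i$ plus $-A_1 K_0$-controlled terms in the second case (via \eqref{3I-55'} and \eqref{hess-a276}), so that taking $A_1$ large after $N$ is fixed closes the estimate. I expect the bookkeeping near the "transition" — points where $\lambda(U)$ is large but $\nu_{\tilde\lambda}$ is near $\nu_{\tilde\mu}$ — to be the delicate part, exactly as in the global estimate of Section~\ref{3I-C}.
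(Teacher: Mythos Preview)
There is a genuine gap in your treatment of the term $-A_3\sum_{l<n}|\nabla_l(u-\varphi)|^2$. You write that the squared contribution $-2A_3\sum_{l<n}F^{ij}\nabla_i\nabla_l(u-\varphi)\,\nabla_j\nabla_l(u-\varphi)\leq 0$ is ``harmful'' and must be controlled. This is backwards: since the goal is $F^{ij}\nabla_{ij}\varPsi \leq -K\big(1+\sum f_i|\lambda_i|+\sum f_i\big)$, a large negative contribution is exactly what is wanted. In the paper this is the \emph{main} good term: using \eqref{hess-E170} one shows
\[
\sum_{l<n} F^{ij}\nabla_{ij}|\nabla_l(u-\varphi)|^2 \;\geq\; \sum_{l<n} F^{ij}U_{il}U_{jl} \;-\; C\Big(1+\sum f_i|\lambda_i|+\sum f_i\Big),
\]
and then Proposition~2.19 of \cite{Guan12a} gives $\sum_{l<n}F^{ij}U_{il}U_{jl}\geq \tfrac12\sum_{i\neq r}f_i\lambda_i^2$ for some index $r$. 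This \emph{quadratic} gain $\sum_{i\neq r}f_i\lambda_i^2$ is what ultimately dominates $\sum f_i|\lambda_i|$; your proposed bound $|\nabla_l(u-\varphi)|\leq Cd$ (which would in any case require the very $C^2$ estimate we are trying to prove) plays no role here.

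Without this quadratic term your claimed controls fail in both branches of the dichotomy. In the case $|\nu_\mu-\nu_\lambda|<\beta$ you assert that \eqref{3I-55'} ``directly controls $\sum f_i|\lambda_i|$ by $\sum f_i$,'' but \eqref{3I-55'} only bounds $\sum f_i\lambda_i$ from below; the piece $\sum_{\lambda_i<0}f_i|\lambda_i|$ can be arbitrarily large even when all the $f_i$ are comparable. In the case $|\nu_\mu-\nu_\lambda|\geq\beta$, Lemma~\ref{ma-lemma-C10} yields only $F^{ij}\nabla_{ij}(\ul u-u)\geq\varepsilon\big(1+\sum f_i\big)$, which again does not bound $\sum f_i|\lambda_i|$. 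The paper's route is: in the first case combine the quadratic term with the comparability $f_i\geq\tfrac{\beta}{\sqrt n}\sum f_k$ to prove \eqref{hess-E177} and hence \eqref{hess-E194}; in the second case combine the quadratic term with \eqref{3I-55'} and concavity to obtain \eqref{ma-E89}, namely $\sum f_i|\lambda_i|\leq\epsilon\sum_{i\neq r}f_i\lambda_i^2+\tfrac{C}{\epsilon}\sum f_i+C$. In both cases $A_3$ is fixed \emph{first} (large enough to produce the needed quadratic gain), and only then are $N$, $t$, $\delta$, $A_2$, $A_1$ chosen --- the reverse of the ordering you outline.
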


\begin{proof}
For convenience we write $\tu = u - \varphi$.
At a point $x \in M_{\delta}$ we derive, by \eqref{hess-E170} 
% we assume that $U_{ij}$ and $F^{ij}$ are both diagonal.
\begin{equation}
\label{hess-E170'}
\begin{aligned}
F^{ij} \nabla_{ij} w = 
     \,& \sum_{l < n}  \frac{\nabla_l \tu}{w} F^{ij} \nabla_{ij} \nabla_l \tu
          + \frac{1}{w} \sum_{k, l < n}  F^{ij} \Big(\delta_{kl} - \frac{\nabla_k \tu \nabla_l \tu}{w^2}\Big)  \nabla_{il} \tu \nabla_{jk} \tu \\
%  = \,& 2  F^{ij} \nabla_{\beta} (u - \varphi)
%          \nabla_{ij} \nabla_{\beta} (u - \varphi) \\
 %    \,& + 2  F^{ij} \nabla_i \nabla_l (u - \varphi)
 %              \nabla_j \nabla_{\beta} (u - \varphi)  \\
\geq \,& \frac{1}{4 w} \sum_{l < n} F^{ij} U_{i l} U_{j l}           
     - C \Big(1 + \sum f_i |\lambda_i|\Big)  
         - C (1 + |\nabla u|) \sum f_i.
%\geq \,& F^{ij} U_{i \beta} U_{j \beta} - \epsilon \sum f_i \lambda_i^2
%         - C \Big(1 + \sum f_i\Big).
\end{aligned}
\end{equation}
By Proposition~2.19 in \cite{Guan14} there exists an index $r$ such
that
\begin{equation}
\label{hess-E175}
\sum_{l < n} F^{ij} U_{i l} U_{j l}
     \geq \frac{1}{2} \sum_{i \neq r} f_i \lambda_i^2.
\end{equation}

At a fixed point in $M_{\delta}$ we consider two cases:
{\bf (a)}
$|\nu_{\mu}- \nu_{\lambda}| <  \beta$
 and
{\bf (b)}
$|\nu_{\mu}- \nu_{\lambda}| \geq \beta$
where ${\mu} = \lambda (\nabla^2 \ul u +  \chi)$,
$\lambda = \lambda (\nabla^2 u + \chi)$ and
$\beta$ is as in \eqref{gj-C145}.

Case {\bf (a)}
$|\nu_{\mu} - \nu_{\lambda}| <  \beta$.
We first show that
\begin{equation}
\label{hess-E177}
\sum_{i \neq r} f_i \lambda_i^2 %\sum_{l < n} F^{ij} U_{i l} U_{j l}
     \geq c_0 \sum f_i \lambda_i^2 - C_0 \sum f_i
\end{equation}
for some $c_0, C_0 > 0$. 
For this we shall use
\begin{equation}
\label{hess-a276'}
 f_i \geq \frac{\beta}{\sqrt{n}} \sum f_k,
    \;\; \forall \, 1 \leq i \leq n
\end{equation}
by \eqref{hess-a276} and the fact
 $\sum \lambda_i \geq 0$ which implies
\begin{equation}
\label{hess-a27}
 \sum_{\lambda_i < 0} \lambda_i^2
     \leq \Big(- \sum_{\lambda_i < 0} \lambda_i\Big)^2
    \leq n \sum_{\lambda_i > 0} \lambda_i^2.
\end{equation}
By \eqref{hess-a276'} and \eqref{hess-a27},
\[ f_r \lambda_r^2 \leq n f_r \sum_{\lambda_i > 0} \lambda_i^2
    \leq \frac{n \sqrt{n}}{\beta} \sum_{\lambda_i > 0} f_i \lambda_i^2 \]
provided that $\lambda_r < 0$.
Suppose now that $\lambda_r > 0$.
By the concavity of $f$,
\[ f_r \lambda_r \leq f_r \mu_r 
        + \sum_{i \neq r} f_i (\mu_i - \lambda_i). \]
It follows from Schwarz inequality that
\[  \begin{aligned}
  \frac{\beta f_r \lambda_r^2}{\sqrt{n}} \sum f_k
  \leq \,& f_r^2 \lambda_r^2
  \leq    2 f_r^2 \mu_r^2
        + 2 \sum_{k \neq r} f_k  \sum_{i \neq r} f_i (\mu_i - \lambda_i)^2 \\
  \leq \,& 4 \Big(\sum_{i \neq r} f_i \lambda_i^2
              + \sum f_i \mu_i^2\Big) \sum f_k.
\end{aligned}    \]
This proves \eqref{hess-E177}.

By \eqref{hess-E177} and Schwarz inequality we have
\begin{equation}
\label{hess-E194}
 \begin{aligned}
 \sum_{i \neq r} f_i \lambda_i^2
\geq \,& 2 A w \sum f_i |\lambda_i|
               - \Big(\frac{A^2 w^2}{c_0} + C_0\Big) \sum f_i \\
\geq \,& 2 A w \sum f_i |\lambda_i| - C_1 A^2 w^2 \sum f_i.
\end{aligned}  
\end{equation}
Consequently, in view of \eqref{hess-E170'}, \eqref{hess-E175} and 
\eqref{hess-E177} we may fix $A = O(K)$ to obtain
%we choose $A_3 \geq 1$ such that
\begin{equation}
\label{hess-E195}
\begin{aligned}
F^{ij} \nabla_{ij} w
\geq \,& \frac{c_0}{16 w} \sum f_i \lambda_i^2  + 
\frac{A - C}{8} \sum f_i |\lambda_i|
%+ \frac{b_0 |\lambda|}{8 w} 
- C A^2 w \sum f_i - C \\
\geq \,& \frac{c_0}{16 w} \sum f_i \lambda_i^2  + 
K \sum f_i |\lambda_i|
%+ \frac{b_0 |\lambda|}{8 w} 
- C A^2 w \sum f_i - C. 
\end{aligned}
\end{equation}
%provided that $A = O (1)$ and $R_0 = O (w)$ are fixed sufficiently large. 

Next, since $|\nabla d| \equiv 1$ by \eqref{hess-a276'} 
we may fix $N = O(1)$ sufficiently large and then 
$\delta \leq 1/N$ such that in $M_{\delta}$,
\begin{equation}
\label{eq-110}
\begin{aligned}
F^{ij} \nabla_{ij} (d - N d^2) 
\leq \,& C (1+2N d) \sum F^{ii}
        - 2 N  F^{ij} \nabla_{i} d \nabla_{j} d \\
   \leq \,& - \frac{2\beta N - C}{\sqrt{n}} \sum f_i 
   \leq  - \sum f_i.
% \;\; \mbox{in $M_{\delta}$.}
\end{aligned}
\end{equation}
% \;\; \mbox{in $M_{\delta}$  for $\delta \leq 1/N$.}
%and that $F^{ij}  \nabla_{ij} (u - \ul{u}) \leq 0$ by the concavity of $f$.
  %we see that
Now we  choose $A_2 = O(\delta^{-2} \sqrt{b_2})$
such that 
\[ A_2 \rho^2 + \sqrt{b_2} - w \geq h 
%A_3 \sum_{l< n} |\nabla_l (u - \varphi)|^2
    \;\; \mbox{on $\partial M_{\delta}$}. \]
As $F^{ij}  \nabla_{ij} (u - \ul u) \leq 0$ by the concavity of $f$, it follows from
\eqref{hess-E195} and \eqref{eq-110}, 
\begin{equation}
\label{hess-E198}
\begin{aligned}
F^{ij} \nabla_{ij} \varPsi 
\leq \,& - \frac{c_0}{16 w} \sum f_i \lambda_i^2  - K \sum f_i |\lambda_i| \\
     \,& + (C A^2 w + C A_2 - A_3) \sum f_i + C. 
\end{aligned}
\end{equation}

By \eqref{hess-a300} and \eqref{hess-a276'} there are constants 
$b_0, \, R_0 > 0$ such that when $|\lambda| \geq R_0$
\begin{equation}
\label{hess-E195'}
|\lambda|^2 \sum f_i \geq \sum f_i \lambda_i^2 \geq b_0 |\lambda|. 
\end{equation}
 % for $R$ sufficiently large.
On the other hand, if $|\lambda| \leq R_0$ then 
there exists a uniform $c_1 > 0$ (depending on $R_0$) such that 
$\{F^{ij}\} \geq c_1 I$ and, in particular,  
\begin{equation}
\label{hess-E193}
\sum f_i \geq n c_1.
\end{equation}
Therefore, 
by \eqref{hess-E198}-\eqref{hess-E193} we see that \eqref{ma-E86} holds
in case {\bf (a)} if we fix $A_3 = O(K \sqrt{b_2})$ sufficiently large. Indeed, 
this is clear %from \eqref{hess-E198} and \eqref{hess-E193} if  
if either $|\lambda| \leq R_0$ or $|\lambda| \geq 16 w (K + C)/b_0 c_0$,
while when $R_0 < |\lambda| < 16 w (K + C)/b_0 c_0$ 
we have by \eqref{hess-E195'}
\[ \frac{A_3}{2} \sum f_i \geq \frac{b_0 A_3}{2 |\lambda|} \geq 
\frac{b_0^2 c_0 A_3}{32 w (K + C)} \geq K + C \]
provided that $A_3 \geq 64 \sqrt{b_2} (K + C)^2/b_0^2 c_0$.

Case {\bf (b)}
$|\nu_{\mu} - \nu_{\lambda}| \geq  \beta$.
By Lemma~\ref{ma-lemma-C10}
\begin{equation}
\label{ma-E87}
 F^{ij}  \nabla_{ij} (\ul u - u) \geq \sum f_i (\mu_i - \lambda_i)
  \geq \varepsilon \Big(1 + \sum f_i\Big) 
\end{equation}
for some $\varepsilon > 0$. 
Note that 
if $\lambda_r > 0$ in \eqref{hess-E175} then  
\begin{equation}
\label{ma-E89}
  \begin{aligned}
   \sum f_i |\lambda_i| 
    = \,& \sum  f_i \lambda_i - 2\sum_{\lambda_i < 0} f_i \lambda_i \\
        \leq \,& F^{ij} \nabla_{ij} (u - \ul u) + C \sum f_i  
                - 2 \sum_{\lambda_i < 0} f_i \lambda_i \\
\leq \,& %F^{ij} \nabla_{ij} (u - \ul u)  + 
         \frac{\epsilon}{w} \sum_{i \neq r} f_i \lambda_i^2 
            + \frac{C w}{\epsilon} \sum f_i 
     \end{aligned} 
\end{equation}
for any $\epsilon > 0$. 
Similarly, if $\lambda_r < 0$ then  
\begin{equation}
\label{ma-E89'}
\begin{aligned}
\sum f_i |\lambda_i| 
    = \,& 2 \sum_{\lambda_i > 0} f_i \lambda_i - \sum  f_i \lambda_i \\
%\leq \,& 2 \sum_{\lambda_i > 0} f_i \lambda_i
%             - F^{ij} v_{ij} + C \sum f_i \\
\leq \,& \frac{\epsilon}{w} \sum_{i \neq r} f_i \lambda_i^2
  + \frac{C w}{\epsilon} \sum f_i - F^{ij} \nabla_{ij} (u - \ul u).
     \end{aligned} 
\end{equation}
Therefore, by \eqref{hess-E170'}, \eqref{hess-E175}, \eqref{ma-E89}
and \eqref{ma-E89'}
\begin{equation}
\label{hess-E198'}
\begin{aligned}
F^{ij} \nabla_{ij} \varPsi 
\leq \,& A_1 F^{ij} \nabla_{ij} (u - \ul u) + (C w + C A_2 + C A_3 ) \sum f_i  \\
          & - \frac{1}{2w} \sum_{i \neq r} f_i \lambda_i^2 
             + C \sum f_i |\lambda_i| + C \\
\leq \,& (A_1 - C) F^{ij} \nabla_{ij} (u - \ul u) - K \sum f_i |\lambda_i| \\
          & + C (1 + \epsilon^{-2} + K) w \sum f_i + C
%- \,& \frac{1}{2w} \sum_{i \neq r} f_i \lambda_i^2 
\end{aligned}
\end{equation}
 when $\epsilon$ is sufficiently small. 
Finally, for $A_1 = O (K \sqrt{b_2})$ sufficiently large, 
we derive \eqref{ma-E86} from
\eqref{hess-E198'} and \eqref{ma-E87} in case~{\bf (b)}.
The proof of Lemma~\ref{ma-lemma-E10} is complete. 
\end{proof}

\begin{remark}
\label{remark-B10}
In a previous draft \cite{Guan13a} of this paper, \eqref{hess-a10b}
was proved under the following additional condition:
\begin{equation}
\label{3I-55'}
\sum f_i (\lambda) \lambda_i \geq  - K_0 \Big(1 + \sum f_i\Big)
\;\; \mbox{in $\Gamma \cap \{\inf_M \psi \leq f \leq \sup_M \psi\}$}
\end{equation}
for some $K_0 \geq 0$, which was used 
to derive in place of  \eqref{ma-E89'} when $\lambda_r < 0$ 
\begin{equation}
\label{ma-E89'old}
\begin{aligned}
\sum f_i |\lambda_i| 
    = \,& 2 \sum_{\lambda_i > 0} f_i \lambda_i - \sum  f_i \lambda_i 
%\leq \,& 2 \sum_{\lambda_i > 0} f_i \lambda_i
%             - F^{ij} v_{ij} + C \sum f_i \\
\leq \epsilon \sum_{i \neq r} f_i \lambda_i^2
  + \frac{C}{\epsilon} \sum f_i + C.
     \end{aligned} 
\end{equation}
It was removed using an idea of Heming Jiao to whom we wish to 
express our gratitude. 
 Jiao's idea has been incorporated in our joint paper \cite{GJ}.
\end{remark}

Applying Lemma~\ref{ma-lemma-E10} to $h = \pm \nabla_{\alpha}(u -\phi)$, 
by  \eqref{hess-E170} we derive a bound for the mixed tangential-normal
derivatives
\begin{equation}                                
\label{hess-E130'}
|\nabla_{n\alpha} u (x_0)| %\leq \nabla_n \varPsi (x_0) + C 
 \leq  A_1 \nabla_n (u - \ul u)  (x_0) + C \leq C (1 + |\nabla u|^2),
\;\; \forall \; \alpha < n.
\end{equation}

It remains to establish the double normal derivative estimate
\begin{equation}
\label{cma-200}
 |\nabla_{n n} u (x_0)| \leq C.
\end{equation}

For a $(0, 2)$ tensor $W$ on $\bM$ and $x \in \partial M$,
let $\tilde{W} (x)$ denote the restriction of $W$ to $T_x \partial M$
and $\lambda' (\tilde{W})$ the
eigenvalue of $\tilde{W}$ (with respect to the induced metric).
We shall prove \eqref{cma-200} by showing that there are uniform
%positive
constants $c_0, R_0 > 0$
such that
$(\lambda' (\tilde{U} (x)), R) \in \Gamma$ and
\begin{equation}
\label{cma-205}
f (\lambda' (\tilde{U} (x)), R) \geq \psi (x) + c_0
\end{equation}
for all $R > R_0$ and $x \in \partial M$.
Assuming such $c_0$ and $R_0$ have been found,
by Lemma 1.2 of \cite{CNS3}
we can find $R_1 \geq R_0$ from  
\eqref{hess-E130} and
\eqref{hess-E130'} such
that $U_{nn} (x_0) > R_1$ would imply
%the eigenvalues of
%$\lambda [U\}] = (\lambda_1, \cdots, \lambd'_n)$ are given by
%\[ \lambda_i = \lambda'_i + o (1/U_{nn}), \;\; 1 \leq i \leq n-1; \;\;
% \lambda_n = U_{nn} + o (1/U_{nn})
\[ f (\lambda (U))
    \geq f (\lambda' [\{\tilde{U}], U_{nn}) - \frac{c_0}{2}
    \geq \psi (x) + \frac{c_0}{2}. \]
By equation~\eqref{3I-10} this gives 
We otain a bound $U_{nn} (x_0) \leq R_1$ consequently. 
%for otherwise, we would have a controdiction:
%\[ f (\lambda (U (x_0))) \geq \psi (x_0) + \frac{c_0}{2}. \]

In order to prove \eqref{cma-205} we adapt some idea of Trudinger~\cite{Trudinger95} as in \cite{Guan14}. 
Let 
\[ m_R \equiv \min_{x \in \partial M} %\frac{1}{\psi (x_0)}
         \{f (\lambda' (\tilde{U} (x)), r) - \psi (x)\} \]
and consider 
\[ \tilde{m} \equiv \lim_{R \rightarrow \infty}  m_R. \]

  %   [f (\lambda' (\tilde{U} (x)), R)  - \psi (x)]. \]
We first show that $\tilde{m} \geq c_0$ for some uniform 
constant $c_0 > 0$;
we assume $\tilde{m} < \infty$  for otherwise we are done.

Suppose $\tilde{m}$ is achieved at a point $x_0 \in \partial M$
and choose local orthonormal frames $(e_1, \ldots, e_n)$ around $x_0$
as before, and in particular $e_n$ is normal to $\partial M$, 
such that
$U_{\alpha \beta} (x_0)$ ($1 \leq \alpha, \beta \leq n-1$) is diagonal.
For a symmetric $(n-1)^2$ matrix $\{r_{\alpha  {\beta}}\}$ with
$(\lambda' (\{r_{\alpha \beta} (x_0)\}), R) \in \Gamma$ 
for $R$ sufficiently large, we define
\begin{equation}
\label{c-20a}
  \tF_R [r_{\alpha \beta}] 
    =  f (\lambda' (\{r_{\alpha \beta}\}), R) 
\end{equation}
where $\lambda' (\{r_{\alpha \beta}\})$
denotes the eigenvalues of 
$\{r_{\alpha \beta}\}$ ($1 \leq \alpha, \beta \leq n-1$),
and
\begin{equation}
\label{c-30a}
  \tF [r_{\alpha \beta}]  = \lim_{R \rightarrow + \infty} 
    \tF_R [r_{\alpha \beta}].
\end{equation}
Note that  $\tF$ is finite and concave since $f$ is concave and continuous.

By the concavity of $\tF$ there is a symmetric matrix
$\{\tF^{\alpha {\beta}}_0\}$ such that
\begin{equation}
\label{c-200}
\tF^{\alpha {\beta}}_0 
           (r_{\alpha  {\beta}} - U_{\alpha {\beta}} (x_0)) \geq 
  \tF [r_{\alpha  {\beta}}] - \tF [U_{\alpha  {\beta}} (x_0)]
 \end{equation}
for any symmetric matrix $\{r_{\alpha  \beta}\}$ with
$(\lambda' [\{r_{\alpha  \beta}\}], R) \in \Gamma$;
%$\{\tF^{\alpha {\beta}}_0\}$ is uniquely defined if and only 
if $\tF$ is differentiable at $U_{\alpha {\beta}} (x_0)$ then 
\[ \tF^{\alpha {\beta}}_0
 = \frac{\partial \tF}{\partial r_{\alpha {\beta}}}
        [U_{\alpha {\beta}} (x_0)]. \]

Let
$\sigma_{\alpha {\beta}} = \langle \nabla_{\alpha} e_{\beta}, e_n \rangle$.
 Since $u - \ul u = 0$ on $\partial M$,
\begin{equation}
\label{c-220}
 U_{\alpha {\beta}} - \ul{U}_{\alpha {\beta}}
    = - \nabla_n (u - \ul{u}) \sigma_{\alpha {\beta}}
\;\; \mbox{on $\partial M$}.
\end{equation}
It follows that
\[ \begin{aligned}
 \nabla_n (u - \ul{u}) \tF^{\alpha {\beta}}_0 \sigma_{\alpha {\beta}} (x_0)
 %  = \,& \tF^{\alpha {\beta}}_0 ( \ul{U}_{\alpha {\beta}} (x_0) -
  %       U_{\alpha  {\beta}} (x_0)) \\
\geq \,& \tF[\ul{U}_{\alpha {\beta}} (x_0)] - \tF[U_{\alpha {\beta}} (x_0)]
%  =  \,& \tF[\ul{U}_{\alpha {\beta}} (x_0)] - \psi(x_0) - m_R
 \geq \tilde{c} - \tilde{m}
\end{aligned} \]
where
%\[  c_R \equiv \inf_{\partial M}
%  (\tF[\ul{U}_{\alpha {\beta}}] - F [\ul{U}_{ij}]) > 0 \]
\[ \tilde{c} = 
%\frac{1}{\psi (x_0)}
 %\tF [U_{\alpha \beta} (x_0)] - \psi (x_0). \]
     \liminf_{R \rightarrow \infty} c_R > 0 \]
and
\[  c_R \equiv \min_{\partial M}
    \{f (\lambda' (\tilde{\ul U}), R)  
          - f (\lambda (\tilde{\ul U}))\}. \]
Consequently, if
\[ \nabla_n (u - \ul{u}) (x_0)
    \tF^{\alpha {\beta}}_0 \sigma_{\alpha {\beta}} (x_0)
   \leq \frac{\tc}{2} \]
then $\tm\geq \tc/2 > 0$ and we are done.

Suppose now that
\[ \nabla_n (u - \ul{u}) (x_0) \tF^{\alpha {\beta}}_0 \sigma_{\alpha {\beta}} (x_0)
    > \frac{\tc}{2}. \]
%\tF^{\alpha {\beta}}_0 \ul{U}_{\alpha {\beta}} (x_0). \]
Let $\eta \equiv \tF^{\alpha {\beta}}_0 \sigma_{\alpha {\beta}}$ and note that for some uniform $\epsilon_1 > 0$, %independent of $R$.
\begin{equation}
\label{c-230}
\eta (x_0) \geq \frac{\tc}{2 \nabla_n (u - \ul{u}) (x_0)}
                \geq 2 \epsilon_1 \tc.
\end{equation}
We may assume $\eta \geq \epsilon_1 \tc$ on $\bar{M_{\delta}}$
by requiring $\delta$ small. %(depending on $R$ which is fixed).
Define in $M_{\delta}$,
\[ \begin{aligned}
\varPhi
     = \,& - \nabla_n (u - \varphi)
        + \frac{1}{\eta} \tF^{\alpha {\beta}}_0
           (\nabla_{\alpha {\beta}} \varphi + \chi_{\alpha {\beta}}
          - U_{\alpha {\beta}} (x_0)) - \frac{\psi - \psi (x_0)}{\eta}  \\
\equiv \,& - \nabla_n (u - \varphi) + Q.
\end{aligned} \]
%$Q$ are both smooth, and
We have $\varPhi (x_0) = 0$ and
$\varPhi \geq 0$ on $\partial M$ near $x_0$. 
By \eqref{hess-E170},
%By \eqref{gblq-B50} and \eqref{gblq-B190},
\begin{equation}
\label{gblq-B360}
 \begin{aligned}
F^{ij} \nabla_{ij} \varPhi
  \leq \, & - F^{ij} \nabla_{ij} \nabla_n u + C \sum F^{ii} \\
  \leq  \,& C + C \sum f_i (|\lambda_i| +1).
\end{aligned}
\end{equation}

By \eqref{ma-E86} in Lemma~\ref{ma-lemma-E10},
 for $A_1 \gg A_2 \gg A_3 \gg 1$ we derive
$\varPsi + \varPhi \geq 0$ on $\partial M_{\delta}$ and
\begin{equation}
\label{cma-106}
F^{ij} \nabla_{ij} (\varPsi + \varPhi) \leq  0 \;\; \mbox{in $M_{\delta}$}.
\end{equation}
By the maximum principle,
$\varPsi +  \varPhi \geq 0$ in $M_{\delta}$ and therefore
$\varPhi_n (x_0) \geq - \nabla_n \varPsi (x_0) \geq -C$.
This gives
%\begin{equation}
%\label{cma-310}
$ \nabla_{nn} u (x_0) \leq C$. %\frac{C}{\eta (x_0)} \leq \frac{C}{c_R}.
%\end{equation}

So we have an {\em a priori}
upper bound for all eigenvalues of $U (x_0)$.
Consequently, $\lambda (U (x_0))$ is contained in a
compact subset of $\Gamma$ by \eqref{3I-40}. 
By Lemma 1.2 of \cite{CNS3} again we obtain
\[ \begin{aligned} 
 \tm = \,& \lim_{R \rightarrow \infty} 
  f (\lambda' (\tilde{U} (x_0)), U_{nn} (x_0) + R) - \psi (x_0) \\
    = \,& \lim_{R \rightarrow \infty} 
         f (\lambda (U (x_0)) + R e_n) - \psi (x_0) \\
\geq \,& 
  f (\lambda (U (x_0)) + R_0 \ve_n) - \psi (x_0)] > 0
 %\equiv c_0 > 0 %\frac{\tilde{m}_R}{2} 
\end{aligned} \]
for $R_0$ sufficiently large, 
where $\ve_n = (0, \ldots, 0, 1) \in \bfR^n$.

Next, let  
\[ R_0 = \sup \Big\{R: m_R \leq \frac{\tilde{m}}{2}\Big\}. \]
We may repeat the above argument with $\tF_{R_0}$, $m_{R_0}$, 
$c_{R_0}$ in place of $\tF$, $\tm$, $\tc$, respectively, to
derive an upper bound for $R_0$; we omit the details here. 
The proof of \eqref{hess-a10b} %in Theorem~\ref{3I-th4}
is therefore complete.

\bigskip

\section{Gradient estimates and proof of  
Lemma~\ref{ma-lemma-C10}}
%proof of Theorem~\ref{3I-thm2}}
\label{3I-G}
\setcounter{equation}{0}

\medskip

In this section we first derive the gradient estimate \eqref{hess-a10g}. 
Let $w = 1 + |\nabla u|^2$ and $\phi$
a function to be determined satisfying
\[ \phi  \geq 1, \;\; \phi' \leq - c_0, \;\; \phi'' \leq 0 \]
for some constant $c_0 > 0$. We consider 
%In our case we only need to take a very simple function:
%$\phi (u) = A - u$. 
 the function $\log {w} - \log {\phi (u)}$ which we assume to attain a maximum at an interior point $x_0 \in M$.
Using orthonormal local frames we have at $x_0$,
\begin{equation}
\label{3I-G05a}
   \frac{\nabla_j w}{w} - \frac{\phi' \nabla_j u}{\phi} = 0
\end{equation}
and 
 \begin{equation}
\label{3I-G15a}
\phi F^{ij} \nabla_{ij} w 
  - w F^{ij} (\phi' \nabla_{ij} u + \phi'' \nabla_{i} u\nabla_{j} u) \leq 0. 
\end{equation}
We calculate
%\begin{equation}
%\label{3I-G155a}
$\nabla_j w = 2 \nabla_k u \nabla_{jk} u
                     = 2 (U_{jk} - \chi_{jk}) \nabla_k u$
%\end{equation}
and by Schwarz inequality,
\begin{equation}
\label{3I-G25a}
\begin{aligned}
 F^{ij} \nabla_{ij} w 
      = \,& 2 F^{ij} \nabla_{ik} u \nabla_{jk} u 
               + 2 F^{ij} \nabla_{k} u \nabla_{ijk} u \\
 \geq \,& \frac{3}{2} F^{ij} U_{ik} U_{jk} 
                - C w \sum F^{ii} - 2 |\nabla \psi| \sqrt{w}
\end{aligned} 
\end{equation}

Assume that
$U_{ij} $ and $F^{ij}$ are diagonal at $x_0$. 
Let $I = \{i: n |\nabla_i u| \geq |\nabla u|\}$. 
We see that $I \neq \emptyset$
and by \eqref{3I-G05a} for $i \in I$, 
\[ U_{ii} = \frac{\phi' w}{2 \phi} 
     + \frac{\chi_{ki} \nabla_k u}{\nabla_i u} 
   \leq \frac{\phi' w}{2 \phi} + n |\chi| \equiv - K.  \]
We shall assume $K \geq - \frac{\phi' w}{4 \phi} > 0$; otherwise there is a
bound for $w$ and the proof is complete. 
Let $J = \{i: U_{ii} \leq - K\}$ so $I \subseteq J$. Clearly,
\begin{equation}
\label{3I-G35a}
 F^{ii} U_{ii}^2 \geq K^2 \sum_J F^{ii} 
                    \geq \frac{K^2}{n} \sum F^{ii}. 
\end{equation}

Let ${\mu} = \lambda (\nabla^2 \ul u (x_0)+  \chi (x_0))$,
$\lambda = \lambda (\nabla^2 u (x_0) + \chi (x_0))$ and
$\beta$ as in \eqref{gj-C145}.
Suppose first that $|\nu_{\mu}- \nu_{\lambda}| \geq \beta$.
By %\eqref{3I-R70}, \eqref{3I-R75} and 
Lemma~\ref{ma-lemma-C10} 
%\[ F^{ii} \nabla_{ii} v = F^{ii} \nabla_{ii} (\ul u - u)
 %   \geq \varepsilon \sum F^{ii} + \varepsilon \]
%for some $\varepsilon > 0$.
we obtain 
\[ - \sum_{U_{ii} < 0} F^{ii} U_{ii} 
      \geq \varepsilon - C \sum F^{ii} \]
and therefore, 
\begin{equation}
\label{3I-G45a}
F^{ii} U_{ii}^2 \geq - K \sum_J F^{ii} U_{ii}
\geq - \frac{K}{n} \sum_{U_{ii} < 0} F^{ii} U_{ii}  
\geq \frac{\varepsilon K}{n} -  C K \sum F^{ii}.
\end{equation}

Suppose now that 
$|\nu_{\mu}- \nu_{\lambda}| <  \beta$. 
By \eqref{hess-a300} % there are $R, b_0 > 0$ such that
 when $|\lambda|$ is large enough,
\[ |\lambda| \sum f_i (\lambda) \geq  b_0 \]
for some $b_0 > 0$.
It follows from \eqref{hess-a276} that when $K$
 is sufficiently large, 
\begin{equation}
\label{3I-G55a}
F^{ii} U_{ii}^2 
    \geq \frac{\beta |\lambda|^2}{\sqrt{n}} \sum F^{ii}
    \geq \frac{b_0 \beta |\lambda|}{\sqrt{n}}
   \geq \frac{b_0 \beta K}{\sqrt{n}}.
\end{equation}

Finally, taking
$\phi (u) = - u + 1 + \max_{\bM} u$
and combining \eqref{3I-G15a}-\eqref{3I-G55a}, 
we derive by condition~\eqref{3I-55a'}  a bound for 
$w (x_0)$ in either case;
%combined with \eqref{3I-G45a} or \eqref{3I-G55a};
this is the only place in the paper we need to assume
\eqref{3I-55a'}.
The proof of the gradient estimate \eqref{hess-a10g}
is complete. 

Note that $\ul u \leq u \leq h$ where $h$ satisfies
$\Delta h + \tr \chi = 0$ in $M$ and $\partial M$.
Consequently,
\[ \max_{\bM} u +  \max_{\partial M} |\nabla u| \leq C. \]
We have therefore derived \eqref{hess-a10C2} and the 
proof of Theorem~\ref{3I-thm1} is complete.

We now give a proof of 
Lemma~\ref{ma-lemma-C10} which is slightly more general than the original version in \cite{Guan13a}; see
also \cite{GJ}.

\begin{proof}[Proof of Lemma~\ref{ma-lemma-C10}]
Let $P$ be the hyperplane through $\mu$ parallel to 
$T_{\lambda} \partial \Gamma^{f (\lambda)}$, the tangent plane at $\lambda$ to $\partial \Gamma^{f (\lambda)}$.
Since
%\begin{equation}
%\label{gj-C155}
$0 < \nu_{{\mu}} \cdot \nu_{{\lambda}} \leq 1 - \beta^2/2$,
%\end{equation}
we can find constants $\theta, \, \delta > 0$ and
\[ \zeta \in P \cap \partial B_{\delta} (\mu), \;\;
\eta \in \partial \Gamma^{f (\mu)} \cap \partial B_{\delta} (\mu), 
\]
all depending only on $\beta$ and smoothness of $f$ near $\mu$, such that
\[ f (\zeta) \geq f (\mu) + \theta \]
and 
\[ \nu_{{\lambda}} \cdot (\eta- {\lambda}) 
\leq \nu_{{\lambda}} \cdot (\mu- {\lambda}) - \theta. \]
By the concavity of $f$,
\begin{equation}
\label{gj-C160}
\begin{aligned}
  \sum f_i ({\lambda}) ({\mu}_i - {\lambda}_i)
 = \,& \sum f_i ({\lambda}) (\zeta_i - {\lambda}_i) \\
  \geq \,& f (\zeta) - f ({\lambda}) 
  \geq f ({\mu}) - f ({\lambda}) + \theta
\end{aligned}
\end{equation}
and 
\begin{equation}
\label{gj-C161}
\begin{aligned}
  \sum f_i ({\lambda}) ({\mu}_i - {\lambda}_i)
\geq \,& \sum f_i ({\lambda}) (\eta_i - {\lambda}_i) 
              + \theta |Df (\lambda)| \\
\geq \,& f (\eta) - f ({\lambda}) 
              + \frac{\theta}{\sqrt{n}} \sum f_i \\
     = \,& f (\mu) - f ({\lambda}) 
              + \frac{\theta}{\sqrt{n}} \sum f_i. 
\end{aligned}
\end{equation}
We obtain \eqref{gj-C160'} for
$\varepsilon = \theta/\sqrt{n}$.
\end{proof}

\section{Appendix. Remarks on notions of extended 
subsolutions}
\label{3I-R}
\setcounter{equation}{0}

We end this revision by adding some remarks on results in \cite{Guan14}, \cite{Sze18} and the current paper. 
We first note that for equation~\eqref{3I-10} on
closed manifolds ($\partial M = \emptyset$) we can not use the subsolution assumption as it would mean that either it 
is alrady a solution or \eqref{3I-10} does not have any solution. This is a simple consequence of 
the maximum principle.  
In \cite{Guan14}  we introduced a condition and proved
a counterpart of Lemma~\ref{ma-lemma-C10} which enabled us to derive second order estimates on closed manifolds.

%we recall some facts from \cite{Guan14}.
For $\sigma \in (\sup_{\partial \Gamma} f,  \sup_{\Gamma} f)$, we recall by \eqref{3I-20} and \eqref{3I-30} that 
$\partial \Gamma^{\sigma}$,
the boundary of $\Gamma^{\sigma} = \{\lambda \in \Gamma: 
       f (\lambda) > \sigma\}$,
is a smooth and convex complete hypersurface in $\Gamma$.
Follwing \cite{Guan14} we define
\[ S^{\sigma}_{\mu} = \{\lambda \in \partial \Gamma^{\sigma}:
\nu_{\lambda} \cdot (\mu - \lambda) \leq 0\}, \;\;
\mu \in \Gamma \]
 and 
\[ \mathcal{C}_{\sigma}^+ = \{\mu \in \Gamma: S^{\sigma}_{\mu} \;
  \mbox{is compact}\}. \]
We call $\partial \mathcal{C}_{\sigma}^+$
{\em the tangent cone at infinity} of $\partial \Gamma^{\sigma}$.
Clearly $\mathcal{C}_{\sigma}^+$ is a convex symmetric cone
and contains $\Gamma$. 

In \cite{Guan14} we proved that $\mathcal{C}_{\sigma}^+$
is open and used the following condition
in deriving second order estimates: 
there exists an admissible function $\ul u \in C^2 (\bM)$ statisfying
\begin{equation}
\label{3I-3}
 \lambda (\nabla^2 \ul u + \chi) (x) \in  \mathcal{C}_{\psi (x)}^+,
%\partial \Gamma^{\sigma} \neq \mathcal{C}_{\sigma}
\;\; \forall \; x \in \bM.
\end{equation}
Later on Sz\'ekelyhidi~\cite{Sze18} treated the complex counterpart of equation~\eqref{3I-10} on closed Hermitian manifolds. Among many interesting results and ideas, he introduced a notion of generalized 
subsolutions, called $\cC$-subsolutions, which for
equation~\eqref{3I-10} can be expressed as 
\begin{equation}
\label{3I-3Sze}
 (\lambda (\nabla^2 \ul u + \chi) (x) + \Gamma_n) 
\cap \partial  \Gamma^{\psi (x)} 
\;\; \mbox{is compact}
\;\; \forall \; x \in \bM.
\end{equation}

It turns out that for type 1 cones the two notions are equivalent; here we shall only give a proof that \eqref{3I-3Sze} implies \eqref{3I-3}. For this it enough to show that if $\mu \in \Gamma$ satisfies 
\begin{equation}
\label{3I-3Sze'}
 (\mu + \Gamma_n) 
\cap \partial  \Gamma^{\sigma} 
\;\; \mbox{is compact}
\end{equation}
then $\mu \in \cC_{\sigma}^+$.
Recall from \cite{CNS3} that a cone $\Gamma$ is of type 1 if
all positive $\lambda_i$ axis belong to $\partial \Gamma$. 

%Part {\bf a)} follows from the following observation.
Let $T$ be a supporting plane to $\cC_{\sigma}^+$
with unit normal vector $\eta = (\eta_1, \ldots, \eta_n)$. We may assume $\eta_n \geq \cdots \geq \eta_1 \geq 0$ since
$\eta \in \ol\Gamma_n$.
As $\Gamma$ is of type 1, through each axis there are supporting planes to $\Gamma$.
Let $P$ be a supporting plane to $\Gamma$ through
the $\lambda_1$-axis, and 
$\nu$ denote the unit normal vector to $P$.
Then $\nu_1 = \nu \cdot e_1 = 0$ where $e_i$ denotes the unit vector in the positive 
$\lambda_i$-axis direction, and 
\[ h := \inf_{\lambda \in \Gamma^{\sigma}} 
          \nu \cdot \lambda \geq 0 \]
since $\Gamma^{\sigma}$ is contained in $\Gamma$. 
We see that $\eta_1 = 0$. 

Suppose now that $\mu \in \Gamma$ satisfies \eqref{3I-3Sze'}  but $\mu \notin \cC_{\sigma}^+$. 
Let
$\mu' \in \partial \cC_{\sigma}^+$ with 
\[ |\mu - \mu'| = 
   \mbox{dist} (\mu, \tilde{\cC}_{\sigma}^+) > 0. \]
Clearly 
$\nu' := (\mu' - \mu)/|\mu' - \mu'| \in \partial \Gamma_n$ as it is the unit normal to a supporting plane $P$ of $\cC_{\sigma}^+$ at $\mu'$. 
We may assume $\nu' = (0, \nu'_2, \ldots, \nu'_n)$
and let $\mu^{R} := \mu + (R, 0, \ldots, 0)$. Then
\[ \mbox{dist} (\mu^R, \cC_{\sigma}^+) 
   \geq \mbox{dist} (\mu^R, P) 
  = \nu' \cdot (\mu' - \mu^R) = |\mu' - \mu| > 0. \]
This contradicts to the assumption that 
$\mu^R \in \Gamma^{\sigma}$ when $R$ is large enough.

%\bigskip

\small


\begin{thebibliography}{99}

\bibitem{Blocki}
Z. Blocki,
{\em On geodesics in the space of K\"ahler metrics},
in "Advances in Geometric Analysis", Advanced Lectures in Mathematics 21,
pp. 3--20, International Press, 2012.

\bibitem{CKNS}
L. A. Caffarelli, J. J. Kohn, L. Nirenberg and J. Spruck,
{\em The Dirichlet problem for nonlinear second-order elliptic equations II.
Complex Monge-Amp\`{e}re and uniformly elliptic equations},
{ Comm. Pure Applied Math.} {\bf 38} (1985), 209--252.

\bibitem{CNS1}
L. A. Caffarelli, L. Nirenberg and J. Spruck,
{\em The Dirichlet problem for nonlinear second-order elliptic equations I.
Monge-Amp\`{e}re equations},
{ Comm. Pure Applied Math.} {\bf 37} (1984), 369--402.

\bibitem{CNS3}
L. A. Caffarelli, L. Nirenberg and J. Spruck,
{\em The Dirichlet problem for nonlinear second-order elliptic equations
 III: Functions of eigenvalues of the Hessians},
{ Acta Math.} {\bf 155} (1985), 261--301.

\bibitem{Chen00}
X.-X. Chen,
{\em The space of K\"ahler metrics},
J. Differential Geom. {\bf 56} (2000), 189--234.

\bibitem{CLN69}
S. S. Chern, H. I. Levine and L. Nirenberg, 
{\em Intrinsic norms on a complex manifold}, 
in: Global Analysis (Papers in Honor
of K. Kodaira), Univ. Tokyo Press, Tokyo, 1969, pp. 119--139.

\bibitem{CW01}
K.-S. Chou and X.-J. Wang,
{\em A variational theory of the Hessian equation},
Comm. Pure Appl. Math. {\bf 54} (2001), 1029--1064.

\bibitem{CJY}
T. C. Collins, A. Jacob and S.-T. Yau, 
{\em $(1,1)$ forms with specified Lagrangian phase: A priori estimates and algebraic obstructions}, arXiv:1508.01934

\bibitem{CPW17}
T. C. Collins, S. Picard and X. Wu, 
{\em Concavity of the Lagrangian phase operator and applications},
Calc. Var. PDE {\bf 56} (2017), no. 4, Art. 89, 22 pp.

\bibitem{DK}
S. Dinew and S. Kolodziej,
{\em Liouville and Calabi-Yau type theorems for complex Hessian equations},
 Amer. J. Math. {\bf 139} (2017), 403--415.

\bibitem{Donaldson99}
 S. K. Donaldson,
{\em Symmetric spaces, K\"ahler geometry and Hamiltonian dynamics},
Northern California Symplectic Geometry Seminar, 13--33,
Amer. Math. Soc. Transl. Ser. 2, {\bf 196},
Amer. Math. Soc., Providence, RI, 1999.

\bibitem{Dong06}
H.-J. Dong, 
{\em Hessian equations with elementary symmetric functions}, Comm. PDE {\bf 31} (2006), 1005--1025. 

\bibitem{FWW11}
 J.-X. Fu, Z.-Z. Wang and D.-M. Wu, 
{\em Form-type Calabi-Yau equations},
 Math. Res. Lett. {\bf 17} (2010), 887--903.

\bibitem{FWW15}
 J.-X. Fu, Z.-Z. Wang and D.-M. Wu, 
{\em Form-type equations on K\"ahler manifolds of nonnegative orthogonal bisectional curvature}
 Calc. Var. PDE {\bf 52} (2015), 327--344.

\bibitem{Guan98a}
B. Guan,
{\em The Dirichlet problem for Monge-Amp\`ere equations in non-convex
domains and spacelike hypersurfaces of constant Gauss curvature},
Trans. Amer. Math. Soc. {\bf 350} (1998), 4955--4971.

\bibitem{Guan98b}
B. Guan, {\em The Dirichlet problem for complex Monge-Amp\`ere
equations and regularity of the pluri-complex Green function},
Comm. Anal. Geom. {\bf 6} (1998), 687--703. {\em A correction},
 {\bf 8} (2000), 213--218.

\bibitem{Guan14}
B. Guan,
{\em Second order estimates and regularity for fully nonlinear elliptic equations on Riemannian manifolds}, 
Duke Math. J. {\bf 163} (2014), 1491-1524.

\bibitem{Guan13a}
B. Guan,
{\em The Dirichlet problem for fully nonlinear elliptic equations on Riemannian manifolds}, 
arXiv:1403.2133v1. 

\bibitem{GJ}
B.~Guan and H.-M. Jiao,
{\em Second order estimates for Hessian type fully nonlinear elliptic equations on Riemannian manifolds},  
Calc. Var. PDE {\bf 54} (2015), 2693--2712. 

\bibitem{GL96}
B. Guan and Y.-Y. Li,
{\em Monge-Amp\`ere equations on Riemannian manifolds},
J. Diff. Equat. {\bf 132} (1996), 126--139.

\bibitem{GN}
B.~Guan and X.-L. Nie,
{\em Fully nonlinear elliptic equations with gradient terms on Hermitian manifolds}, preprint 2015.

\bibitem{GSS15}
B. Guan, S.-J.  Shi and Z.-N. Sui, 
{\em On estimates for fully nonlinear parabolic equations on Riemannian manifolds},
Anal. PDE {\bf 8} (2015), 1145--1164. 

\bibitem{GS93}
B.~Guan and J.~Spruck,
{\em Boundary value problem on $\bfS^n$ for surfaces of constant Gauss curvature},
{ Ann. of Math.} {\bf 138} (1993), 601--624.

\bibitem{GS02}
B.~Guan and J.~Spruck,
{\em The existence of hypersurfaces of constant Gauss curvature with prescribed boundary},
 J. Differential Geom. {\bf 62} (2002), 259--287.

\bibitem{GuanPF02}
P.-F. Guan,
{\em Extremal  functions related to intrinsic norms},
Ann. of Math. {\bf 156} (2002), 197--211.

%\bibitem{GuanPF10}
%P.-F. Guan,
%{\em Remarks on the homogeneous complex Monge-Amp\`ere equation},
%Complex Analysis, Trends in Mathematics, Spriner Basel AG. %(2010), 175--185.


\bibitem{HRS}
D.~Hoffman, H.~Rosenberg, and J.~Spruck,
{\em Boundary value problem for surfaces of constant {G}auss curvature},
{Comm. Pure Appl. Math.} {\bf 45} (1992), 1051--1062.

\bibitem{Ivochkina85}
N. M.~Ivochkina,
{\em Solution of the Dirichlet problem for certain equations of
Monge-Amp\`ere type},
Mat. Sb. (N.S.) {\bf 128 (170)} (1985), 403--415;
; English transl.: Math. USSR-Sb. {\bf 56} (1987), 403--415.

\bibitem{ITW04}
N. M. Ivochkina, N. Trudinger and  X.-J. Wang,
{\em The Dirichlet problem for degenerate Hessian equations},
Comm. PDE. {\bf 29} (2004), 219--235.

%\bibitem{Krylov83}
%N. V.~Krylov,
%{\em Boundedly nonhomogeneous elliptic and parabolic %equations in a domain},
%{Izvestia Math. Ser.} {\bf 47} (1983), 75--108.

\bibitem{Krylov83a}
N. V. Krylov, 
{\em On degenerate nonlinear elliptic equations, II.}
 Mat. Sbornik, {\bf 121} (163) (1983), 211--232; 
English translation: Math. USSR Sbornik, {\bf 49} (1984) 207--228.

\bibitem{LiYY90}
Y.-Y. Li,
{\em Some existence results of fully nonlinear elliptic equations
of Monge-Ampere type},
{ Comm. Pure Applied Math.}  {\bf 43} (1990), 233--271.

\bibitem{NV14}
N. Nadirashvili and S. Vladut, 
{\em Singular solutions of Hessian elliptic equations in five dimensions},
 J. Math. Pures Appl. (9) {\bf 100} (2013), 769–-784.

\bibitem{PS10}
D. H. Phong and J.  Sturm, 
{\em The Dirichlet problem for degenerate complex 
Monge-Amp\`ere equations}
Comm. Anal. Geom. {\bf 18} (2010), 145--170.

\bibitem{Rosenberg93}
H.~Rosenberg, 
{\em Hypersurfaces of constant curvature in space forms},
 Bull. Sci. Math. {\bf 117} (1993), 211--239.

\bibitem{Serrin69}
J.~Serrin,
{\em The problem of {D}irichlet for quasilinear elliptic differential
  equations with many independent variables},
{Philos. Trans. Royal Soc. London} {\bf 264} (1969), 413--496.

\bibitem{Sze18}
G. Sz\'ekelyhidi,
{\em Fully non-linear elliptic equations on compact Hermitian manifolds},
J. Differential Geom. {\bf 109} (2018), 337--378. 

\bibitem{STW17}
G. Sz\'ekelyhidi, V. Tosatti and B. Weinkove, 
{\em Gauduchon metrics with prescribed volume form},
 Acta Math. {\bf 219} (2017), 181--211.

\bibitem{TW17}
V. Tosatti and B. Weinkove, 
{\em The Monge-Amp\`ere equation for 
$(n-1)$-plurisubharmonic functions on a compact K\"ahler manifold}, 
 J. Amer. Math. Soc. {\bf 30} (2017), 311--346.

\bibitem{Trudinger95}
N. S. Trudinger,
{\em On the Dirichlet problem for Hessian equations},
Acta Math. {\bf 175} (1995), 151--164.

\bibitem{TW99}
N. S. Trudinger and X.-J. Wang,
{\em Hessian measures. II}, 
Ann. of Math. (2) {\bf 150} (1999), 579--604.

\bibitem{TW02}
N. S. Trudinger and X.-J. Wang,
{\em On locally convex hypersurfaces with boundary},
 J. Reine Angew. Math. {\bf 551} (2002), 11--32.

\bibitem{Urbas02}
J. Urbas,
{\em Hessian equations on compact Riemannian manifolds},
Nonlinear Problems in Mathematical Physics and Related Topics II
 367--377, Kluwer/Plenum, New York, 2002.

\bibitem{Wang94}
X.-J. Wang,
{\em A class of fully nonlinear elliptic equations and related functionals},
Indiana Univ. Math. J. {\bf 43} (1994), 25--54.

\bibitem{Yuan01}
Y. Yuan,
{\em  A priori estimates for solutions of fully nonlinear special Lagrangian equations},
Ann. Inst. H. Poincar\'e Anal. Non Lin\'eaire {\bf 18} (2001),  
261--270.
\end{thebibliography}
\end{document}